\newtheorem{thm}[equation]{Theorem}
\newtheorem{cl}[equation]{Claim}
\newtheorem{prop}[equation]{Proposition}
\newtheorem{lemma}[equation]{Lemma}
\theoremstyle{definition}
\newtheorem{defn}[equation]{Definition}
\newtheorem{remark}[equation]{Remark}
\numberwithin{equation}{section}
\newcommand\degg{\, \mathsf{deg} \,}
\title{$C^*$- Colored graph algebras}
\author[1]{Farrokh Razavinia \thanks{This research was in part also supported by a grant from IPM (No. 1404140052)}
}
\affil[1]{School of Mathematics, Institute for Research in Fundamental Sciences (IPM), P.O. Box: 19395‐5746, Tehran, Iran}
\affil[]{f.razavinia@phystech.edu}
\date{}
\begin{document}

\maketitle

\begin{abstract}
Following our previous works on $C^*$-graph algebras and the associated Cuntz-Krieger graph families, in this paper we will try to have a look at the colored version of these structures and to see what a $C^*$-colored graph algebra might mean by employing some constructive examples very close to the toy example used in our previous works, and we also will try to study their graph theoretical properties as possible.
\newline\newline
\textbf{MSC Numbers (2020)}: Primary 46L05; Secondary 46L89,58B34.
 \hfill \newline
\textbf{Keywords}: graph algebra; colored graph algebra; $C^*$-algebra; Cuntz-Krieger algebra; quantum permutation group; magic unitary; quantum graph.
\end{abstract}

\section{Notations}
Throughout this paper, our ground field will be the field of complex numbers $\mathbb C$, and we will use $*$ to present the complex conjugate transpose of vectors and matrices. 

We will use $\overset{\leftarrow}{\degg}_{i}$ in order to show the entry degree of the vertex $v_i$, and by entry degree we mean the number of edges that are entering to the vertex $v_i$, and analogously we have exit degree of the vertex $v_i$, which will be considered by $\overset{\rightarrow}{\degg}_{i}$.

Other required notations will be introduced within the original text of the paper.

\section{Introduction}
In \cite{R243} we introduced and studied a non-entangled $(4i-6)$-qubit quantum system based on the Cuntz-Krieger graph families related to the directed graphs $\mathcal{G}_i$ associated with the coordinate ring of the $i\times i$ quantum matrix algebra $M_i(q)$, for $i\in\{2,3,\cdots\}$, and the multiplier Hopf $*$-graph algebras $\mathcal{O}(GL_n(\mathbb C),\Delta)$ and $\mathcal{O}(SL_n(\mathbb C),\Delta)$, for $\Delta$ defined as follows based on the commuting matrices $\pi_i$ with the adjacency matrix $\Pi_i$ of the directed graphs $\mathcal{G}_i$.
\begin{align}
        \Delta:&\mathcal{O}(G))\to M(\mathcal{O}(G)\otimes \mathcal{O}(G))\\&\hspace*{0.3cm} E_{i,j}\longmapsto E_{k,h}\otimes E_{o,r}:=E_{\ell,m},
    \end{align}

for $G$ playing the role of $GL(n)$ and $SL(n)$, and $\ell=P_{o}^{k}$ and $m= P_{r}^{h}$, linearly expandable on whole of $M_n(\mathbb C)$, where $GL(n)$ and $SL(n)$ are defined.
\begin{remark}
    Note that while working with $\pi_n$, actually our space is a $n^2\times n^2$ block matrix, consisting of $n^n$, $n\times n$ matrices, $n$ blocks in each row and column.  In what comes, our base will be the set $\{1,\cdots,n^2\}$ divided into $n$ partitions $P_1$ to $P_n$, such that each $P_i$, $i\in\{1,\cdots,n\}$, also is divided into $n$ partitions $P_{i}^{j}$ for $j\in\{1,\cdots,n\}$.
\end{remark}

In \cite{R243} our initial example was the graph $\mathcal{G}_2:=\mathcal{G}(\Pi_2)$ associated with the coordinate ring of $M_q(2)$, with the set of vertices and edges as $\mathcal{G}_{2}^{0}=\{x_{11}:=u, x_{12}:=v, x_{22}:=k, x_{21}:=w\}$ and $\mathcal{G}_{2}^{1}=\{x_{11}\overrightarrow{\sim}x_{12}:=e, x_{11}\overrightarrow{\sim}x_{21}:=f, x_{12}\overrightarrow{\sim}x_{22}:=h, x_{21}\overrightarrow{\sim}x_{22}:=g, x_{12}\overrightarrow{\sim}x_{21}:=i, x_{21}\overrightarrow{\sim}x_{12}:=j\}$, 

\vspace*{0.1cm}

\hspace*{2.7cm} \begin{tikzpicture}\label{Gra:2}
\tikzset{vertex/.style = {shape=circle,draw,minimum size=0.7em}}
\tikzset{edge/.style = {->,> = latex'}}
\node[vertex] (a) at  (0,0) {$u$};
\node[vertex] (b) at  (4,3) {$v$};
\node[vertex] (c) at  (8,0) {$k$};
\node[vertex] (d) at  (4,-3) {$w$};
\draw[edge] (a) to node[above] {e} (b);
\draw[edge] (b) to node[above] {h} (c);
\draw[edge] (a) to node[below] {f} (d);
\draw[edge] (d) to node[below] {g} (c);




\draw[edge] (b) to[bend left] node[right] {i} (d);
\draw[edge] (d) to[bend left] node[left] {j} (b);

\end{tikzpicture}
\captionof{figure}{\textbf{Directed locally connected graph related to $\Pi_2$}}

\vspace*{0.1cm}

Here our initial example will be almost the same, but the complete one and undirected. Consider the following undirected graph on 4 vertices.

\vspace*{0.1cm}

\hspace*{3.3cm} \begin{tikzpicture}\label{Gra:2}
\tikzset{vertex/.style = {shape=circle,draw,minimum size=0.7em}}
\tikzset{edge/.style = {-,> = latex'}}
\textcolor{red}{\node[vertex] (a) at  (0,0) {$u$};}
\textcolor{blue}{\node[vertex] (b) at  (4,3) {$v$};}
\textcolor{green}{\node[vertex] (c) at  (8,0) {$k$};}
\textcolor{blue}{\node[vertex] (d) at  (4,-3) {$w$};}
\textcolor{teal}{\draw[edge] (a) to node[above] {e} (b);}
\textcolor{blue}{\draw[edge] (b) to node[above] {h} (c);}
\textcolor{blue}{\draw[edge] (a) to node[below] {f} (d);}
\textcolor{teal}{\draw[edge] (d) to node[below] {g} (c);}




\textcolor{black}{\draw[edge] (a) to node[below] {\hspace{0cm}\begin{turn}{0}$j$\end{turn}} (c);}

\end{tikzpicture}
\captionof{figure}{\textbf{Two connected graph $\mathsf{Sq}_2$}}\label{Fig::1}

\vspace*{0.1cm}

Note that the 2 connected graph presented in Figure \ref{Fig::1}, with $K(\mathsf{Sq}_2)=2$ and the vertex and edge chromatic number $\chi_{v,e}(\mathsf{Sq}_2)=3$, is not a special graph on its own, but could become a special graph by enlarging it in a predefined way as follows.

\vspace*{0.5cm}

\hspace*{3.1cm} \begin{tikzpicture}\label{Gra:2}
\tikzset{vertex/.style = {shape=circle,draw,minimum size=0.7em}}
\tikzset{edge/.style = {-,> = latex'}}
\textcolor{red}{\node[vertex] (a) at  (0,0) {$u$};}
\textcolor{blue}{\node[vertex] (b) at  (4,3) {$v$};}
\textcolor{green}{\node[vertex] (c) at  (8,0) {$k$};}
\textcolor{blue}{\node[vertex] (d) at  (4,-3) {$w$};}
\textcolor{red}{\node[vertex] (e) at  (0,3) {$l$};}
\textcolor{red}{\node[vertex] (f) at  (0,-3) {$m$};}
\textcolor{green}{\node[vertex] (g) at  (8,3) {$n$};}
\textcolor{green}{\node[vertex] (h) at  (8,-3) {$o$};}
\textcolor{red}{\draw[edge] (a) to node[above] {e} (b);}
\textcolor{blue}{\draw[edge] (b) to node[below] {\hspace{1.0cm}\begin{turn}{-360}$h$\end{turn}} (c);}
\textcolor{blue}{\draw[edge] (a) to node[below] {\hspace{1.6cm}\begin{turn}{20}$f$\end{turn}} (d);}
\textcolor{teal}{\draw[edge] (d) to node[below] {g} (c);}
\textcolor{teal}{\draw[edge] (g) to node[above] {\hspace{0.2cm}\begin{turn}{20}$n$\end{turn}} (f);}
\textcolor{red}{\draw[edge] (g) to node[below] {\hspace{0.2cm}\begin{turn}{20}$m$\end{turn}} (d);}
\textcolor{black}{\draw[edge] (e) to node[above] {\hspace{-1.2cm}\begin{turn}{20}$o$\end{turn}} (d);}
\textcolor{red}{\draw[edge] (e) to node[below] {\hspace{0.2cm}\begin{turn}{20}$p$\end{turn}} (h);}
\textcolor{black}{\draw[edge] (f) to node[below] {\hspace{1.2cm}\begin{turn}{320}$r$\end{turn}} (b);}
\textcolor{teal}{\draw[edge] (h) to node[below] {\hspace{1.2cm}\begin{turn}{20}$p$\end{turn}} (b);}



\draw[edge] (a) to node[above] {\hspace{3cm}\begin{turn}{0}$j$\end{turn}} (c);

\end{tikzpicture}

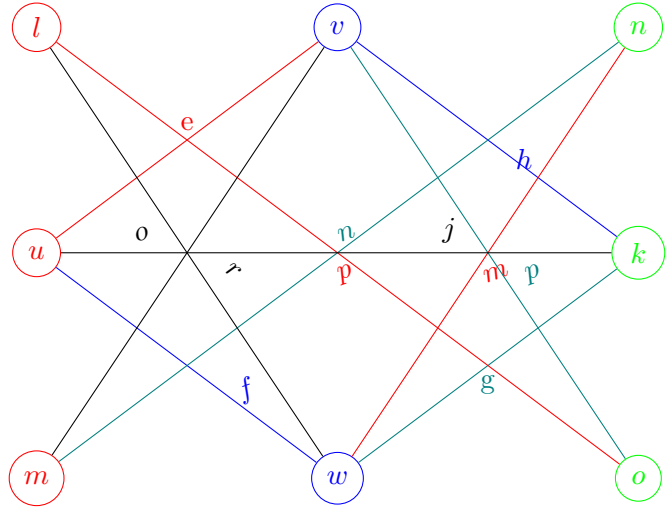
\captionof{figure}{\textbf{Four connected graph $\mathsf{Sq}_3$}}\label{Fig::2}

\vspace*{0.5cm}

Note that in order to obtain the graph presented in Figure \ref{Fig::2}, we added four more vertices, so that the graph obtained by connecting some certain vertices to each other still possesses the same vertex chromatic number $\chi_v(\mathsf{Sq_3})=3$, but having $K(\mathsf{Sq}_3)=4$, and the edge chromatic number $\chi_e(\mathsf{Sq}_3)=4$.

\vspace*{0.5cm}

\hspace*{1.7cm} \begin{tikzpicture}\label{Gra:2}
\tikzset{vertex/.style = {shape=circle,draw,minimum size=0.7em}}
\tikzset{edge/.style = {-,> = latex'}}
\textcolor{red}{\node[vertex] (a) at  (0,0) {$u$};}
\textcolor{blue}{\node[vertex] (b) at  (4,3) {$v$};}
\textcolor{green}{\node[vertex] (c) at  (11,0) {$k$};}
\textcolor{blue}{\node[vertex] (d) at  (4,-6) {$w$};}
\textcolor{red}{\node[vertex] (e) at  (0,3) {$l$};}
\textcolor{red}{\node[vertex] (f) at  (0,-6) {$m$};}
\textcolor{green}{\node[vertex] (g) at  (11,3) {$n$};}
\textcolor{green}{\node[vertex] (h) at  (11,-6) {$o$};}
\textcolor{red}{\node[vertex] (a_1) at  (0,-3) {$u_1$};}
\textcolor{blue}{\node[vertex] (b_1) at  (7,3) {$v_1$};}
\textcolor{green}{\node[vertex] (c_1) at  (11,-3) {$k_1$};}
\textcolor{blue}{\node[vertex] (d_1) at  (7,-6) {$w_1$};}

\textcolor{brown}{\draw[edge] (a) to node[above] {e} (b);}
\textcolor{blue}{\draw[edge] (b) to node[below] {\hspace{1.0cm}\begin{turn}{-360}$h$\end{turn}} (c);}
\textcolor{blue}{\draw[edge] (a) to node[below] {\hspace{1.6cm}\begin{turn}{20}$f$\end{turn}} (d);}
\textcolor{teal}{\draw[edge] (d) to node[below] {g} (c);}
\textcolor{teal}{\draw[edge] (g) to node[above] {\hspace{0.2cm}\begin{turn}{20}$n$\end{turn}} (f);}
\textcolor{red}{\draw[edge] (g) to node[below] {\hspace{0.2cm}\begin{turn}{20}$m$\end{turn}} (d);}
\textcolor{black}{\draw[edge] (e) to node[above] {\hspace{-1.2cm}\begin{turn}{20}$o$\end{turn}} (d);}
\textcolor{red}{\draw[edge] (e) to node[below] {\hspace{0.2cm}\begin{turn}{20}$p$\end{turn}} (h);}
\textcolor{black}{\draw[edge] (f) to node[below] {\hspace{1.2cm}\begin{turn}{320}$r$\end{turn}} (b);}
\textcolor{teal}{\draw[edge] (h) to node[below] {\hspace{1.2cm}\begin{turn}{20}$p$\end{turn}} (b);}


\textbf{\textcolor{red}{\draw[edge] (b_1) to node[below] {\hspace{1.9cm}\begin{turn}{20}$p_1$\end{turn}} (c);}}
\textbf{\textcolor{red}{\draw[edge] (b_1) to node[below] {\hspace{1.9cm}\begin{turn}{20}$p_1$\end{turn}} (c);}}
\textbf{\textcolor{red}{\draw[edge] (b_1) to node[below] {\hspace{1.9cm}\begin{turn}{20}$p_1$\end{turn}} (c);}}
\textbf{\textcolor{red}{\draw[edge] (b_1) to node[below] {\hspace{1.9cm}\begin{turn}{20}$p_1$\end{turn}} (c);}}
\textbf{\textcolor{yellow}{\draw[edge] (c) to node[below] {\hspace{0.6cm}\begin{turn}{20}$g_1$\end{turn}} (d_1);}}
\textbf{\textcolor{yellow}{\draw[edge] (c) to node[below] {\hspace{0.6cm}\begin{turn}{20}$g_1$\end{turn}} (d_1);}}
\textbf{\textcolor{yellow}{\draw[edge] (c) to node[below] {\hspace{0.6cm}\begin{turn}{20}$g_1$\end{turn}} (d_1);}}
\textbf{\textcolor{yellow}{\draw[edge] (c) to node[below] {\hspace{0.6cm}\begin{turn}{20}$g_1$\end{turn}} (d_1);}}
\textbf{\textcolor{black}{\draw[edge] (g) to node[below] {\hspace{0.6cm}\begin{turn}{20}$d_1$\end{turn}} (d_1);}}
\textbf{\textcolor{black}{\draw[edge] (g) to node[below] {\hspace{0.6cm}\begin{turn}{20}$d_1$\end{turn}} (d_1);}}
\textbf{\textcolor{black}{\draw[edge] (g) to node[below] {\hspace{0.6cm}\begin{turn}{20}$d_1$\end{turn}} (d_1);}}
\textbf{\textcolor{black}{\draw[edge] (g) to node[below] {\hspace{0.6cm}\begin{turn}{20}$d_1$\end{turn}} (d_1);}}
\textbf{\textcolor{black}{\draw[edge] (g) to node[below] {\hspace{0.6cm}\begin{turn}{20}$d_1$\end{turn}} (d_1);}}
\textbf{\textcolor{black}{\draw[edge] (g) to node[below] {\hspace{0.6cm}\begin{turn}{20}$d_1$\end{turn}} (d_1);}}
\textbf{\textcolor{black}{\draw[edge] (g) to node[below] {\hspace{0.6cm}\begin{turn}{20}$d_1$\end{turn}} (d_1);}}
\textbf{\textcolor{black}{\draw[edge] (g) to node[below] {\hspace{0.6cm}\begin{turn}{20}$d_1$\end{turn}} (d_1);}}
\textbf{\textcolor{blue}{\draw[edge] (c_1) to node[below] {\hspace{1.6cm}\begin{turn}{20}$b_1$\end{turn}} (b_1);}}
\textbf{\textcolor{blue}{\draw[edge] (c_1) to node[below] {\hspace{1.6cm}\begin{turn}{20}$b_1$\end{turn}} (b_1);}}
\textbf{\textcolor{blue}{\draw[edge] (c_1) to node[below] {\hspace{1.6cm}\begin{turn}{20}$b_1$\end{turn}} (b_1);}}
\textbf{\textcolor{blue}{\draw[edge] (c_1) to node[below] {\hspace{1.6cm}\begin{turn}{20}$b_1$\end{turn}} (b_1);}}
\textbf{\textcolor{red}{\draw[edge] (c_1) to node[below] {\hspace{1.6cm}\begin{turn}{20}$c_1$\end{turn}} (b);}}
\textbf{\textcolor{red}{\draw[edge] (c_1) to node[below] {\hspace{1.6cm}\begin{turn}{20}$c_1$\end{turn}} (b);}}
\textbf{\textcolor{red}{\draw[edge] (c_1) to node[below] {\hspace{1.6cm}\begin{turn}{20}$c_1$\end{turn}} (b);}}
\textbf{\textcolor{red}{\draw[edge] (c_1) to node[below] {\hspace{1.6cm}\begin{turn}{20}$c_1$\end{turn}} (b);}}
\textbf{\textcolor{yellow}{\draw[edge] (c_1) to node[above] {\hspace{3.2cm}\begin{turn}{20}$a_1$\end{turn}} (d);}}
\textbf{\textcolor{yellow}{\draw[edge] (c_1) to node[above] {\hspace{3.2cm}\begin{turn}{20}$a_1$\end{turn}} (d);}}
\textbf{\textcolor{yellow}{\draw[edge] (c_1) to node[above] {\hspace{3.2cm}\begin{turn}{20}$a_1$\end{turn}} (d);}}
\textbf{\textcolor{yellow}{\draw[edge] (c_1) to node[above] {\hspace{3.2cm}\begin{turn}{20}$a_1$\end{turn}} (d);}}
\textbf{\textcolor{black}{\draw[edge] (c_1) to node[above] {\hspace{0.2cm}\begin{turn}{20}$e_1$\end{turn}} (a_1);}}
\textbf{\textcolor{black}{\draw[edge] (c_1) to node[above] {\hspace{0.2cm}\begin{turn}{20}$e_1$\end{turn}} (a_1);}}
\textbf{\textcolor{black}{\draw[edge] (c_1) to node[above] {\hspace{0.2cm}\begin{turn}{20}$e_1$\end{turn}} (a_1);}}
\textbf{\textcolor{black}{\draw[edge] (c_1) to node[above] {\hspace{0.2cm}\begin{turn}{20}$e_1$\end{turn}} (a_1);}}
\textbf{\textcolor{teal}{\draw[edge] (c_1) to node[below] {\hspace{0.2cm}\begin{turn}{20}$f_1$\end{turn}} (d_1);}}
\textbf{\textcolor{teal}{\draw[edge] (c_1) to node[below] {\hspace{0.2cm}\begin{turn}{20}$f_1$\end{turn}} (d_1);}}
\textbf{\textcolor{teal}{\draw[edge] (c_1) to node[below] {\hspace{0.2cm}\begin{turn}{20}$f_1$\end{turn}} (d_1);}}
\textbf{\textcolor{teal}{\draw[edge] (c_1) to node[below] {\hspace{0.2cm}\begin{turn}{20}$f_1$\end{turn}} (d_1);}}
\textbf{\textcolor{teal}{\draw[edge] (a_1) to node[below] {\hspace{0.2cm}\begin{turn}{20}$h_1$\end{turn}} (b_1);}}
\textbf{\textcolor{teal}{\draw[edge] (a_1) to node[below] {\hspace{0.2cm}\begin{turn}{20}$h_1$\end{turn}} (b_1);}}
\textbf{\textcolor{teal}{\draw[edge] (a_1) to node[below] {\hspace{0.2cm}\begin{turn}{20}$h_1$\end{turn}} (b_1);}}
\textbf{\textcolor{teal}{\draw[edge] (a_1) to node[below] {\hspace{0.2cm}\begin{turn}{20}$h_1$\end{turn}} (b_1);}}
\textbf{\textcolor{red}{\draw[edge] (a_1) to node[above] {\hspace{0.2cm}\begin{turn}{20}$k_1$\end{turn}} (d_1);}}
\textbf{\textcolor{red}{\draw[edge] (a_1) to node[above] {\hspace{0.2cm}\begin{turn}{20}$k_1$\end{turn}} (d_1);}}
\textbf{\textcolor{red}{\draw[edge] (a_1) to node[above] {\hspace{0.2cm}\begin{turn}{20}$k_1$\end{turn}} (d_1);}}
\textbf{\textcolor{red}{\draw[edge] (a_1) to node[above] {\hspace{0.2cm}\begin{turn}{20}$k_1$\end{turn}} (d_1);}}
\textbf{\textcolor{yellow}{\draw[edge] (a_1) to node[above] {\hspace{0.2cm}\begin{turn}{20}$o_1$\end{turn}} (b);}}
\textbf{\textcolor{yellow}{\draw[edge] (a_1) to node[above] {\hspace{0.2cm}\begin{turn}{20}$o_1$\end{turn}} (b);}}
\textbf{\textcolor{yellow}{\draw[edge] (a_1) to node[above] {\hspace{0.2cm}\begin{turn}{20}$o_1$\end{turn}} (b);}}
\textbf{\textcolor{yellow}{\draw[edge] (a_1) to node[above] {\hspace{0.2cm}\begin{turn}{20}$o_1$\end{turn}} (b);}}
\textbf{\textcolor{brown}{\draw[edge] (a_1) to node[above] {\hspace{0.2cm}\begin{turn}{20}$o_1$\end{turn}} (d);}}
\textbf{\textcolor{brown}{\draw[edge] (a_1) to node[above] {\hspace{0.2cm}\begin{turn}{20}$o_1$\end{turn}} (d);}}
\textbf{\textcolor{brown}{\draw[edge] (a_1) to node[above] {\hspace{0.2cm}\begin{turn}{20}$o_1$\end{turn}} (d);}}
\textbf{\textcolor{brown}{\draw[edge] (a_1) to node[above] {\hspace{0.2cm}\begin{turn}{20}$o_1$\end{turn}} (d);}}
\textbf{\textcolor{brown}{\draw[edge] (a) to node[above] {\hspace{0.2cm}\begin{turn}{20}$p_1$\end{turn}} (d_1);}}
\textbf{\textcolor{brown}{\draw[edge] (a) to node[above] {\hspace{0.2cm}\begin{turn}{20}$p_1$\end{turn}} (d_1);}}
\textbf{\textcolor{brown}{\draw[edge] (a) to node[above] {\hspace{0.2cm}\begin{turn}{20}$p_1$\end{turn}} (d_1);}}
\textbf{\textcolor{brown}{\draw[edge] (a) to node[above] {\hspace{0.2cm}\begin{turn}{20}$p_1$\end{turn}} (d_1);}}
\textbf{\textcolor{blue}{\draw[edge] (e) to node[above] {\hspace{0.2cm}\begin{turn}{20}$z_1$\end{turn}} (d_1);}}
\textbf{\textcolor{blue}{\draw[edge] (e) to node[above] {\hspace{0.2cm}\begin{turn}{20}$z_1$\end{turn}} (d_1);}}
\textbf{\textcolor{blue}{\draw[edge] (e) to node[above] {\hspace{0.2cm}\begin{turn}{20}$z_1$\end{turn}} (d_1);}}
\textbf{\textcolor{blue}{\draw[edge] (e) to node[above] {\hspace{0.2cm}\begin{turn}{20}$z_1$\end{turn}} (d_1);}}
\textbf{\textcolor{yellow}{\draw[edge] (a) to node[above] {\hspace{1.2cm}\begin{turn}{20}$w_1$\end{turn}} (b_1);}}
\textbf{\textcolor{yellow}{\draw[edge] (a) to node[above] {\hspace{1.2cm}\begin{turn}{20}$w_1$\end{turn}} (b_1);}}
\textbf{\textcolor{yellow}{\draw[edge] (a) to node[above] {\hspace{1.2cm}\begin{turn}{20}$w_1$\end{turn}} (b_1);}}
\textbf{\textcolor{yellow}{\draw[edge] (a) to node[above] {\hspace{1.2cm}\begin{turn}{20}$w_1$\end{turn}} (b_1);}}
\textbf{\textcolor{black}{\draw[edge] (h) to node[above] {\hspace{0.9cm}\begin{turn}{20}$q_1$\end{turn}} (b_1);}}
\textbf{\textcolor{black}{\draw[edge] (h) to node[above] {\hspace{0.9cm}\begin{turn}{20}$q_1$\end{turn}} (b_1);}}
\textbf{\textcolor{black}{\draw[edge] (h) to node[above] {\hspace{0.9cm}\begin{turn}{20}$q_1$\end{turn}} (b_1);}}
\textbf{\textcolor{black}{\draw[edge] (h) to node[above] {\hspace{0.9cm}\begin{turn}{20}$q_1$\end{turn}} (b_1);}}
\textbf{\textcolor{brown}{\draw[edge] (f) to node[above] {\hspace{0.9cm}\begin{turn}{20}$r_1$\end{turn}} (b_1);}}
\textbf{\textcolor{brown}{\draw[edge] (f) to node[above] {\hspace{0.9cm}\begin{turn}{20}$r_1$\end{turn}} (b_1);}}
\textbf{\textcolor{brown}{\draw[edge] (f) to node[above] {\hspace{0.9cm}\begin{turn}{20}$r_1$\end{turn}} (b_1);}}
\textbf{\textcolor{brown}{\draw[edge] (f) to node[above] {\hspace{0.9cm}\begin{turn}{20}$r_1$\end{turn}} (b_1);}}
\draw[edge] (a) to node[above] {\hspace{0.2cm}\begin{turn}{0}$j$\end{turn}} (c);

\end{tikzpicture}

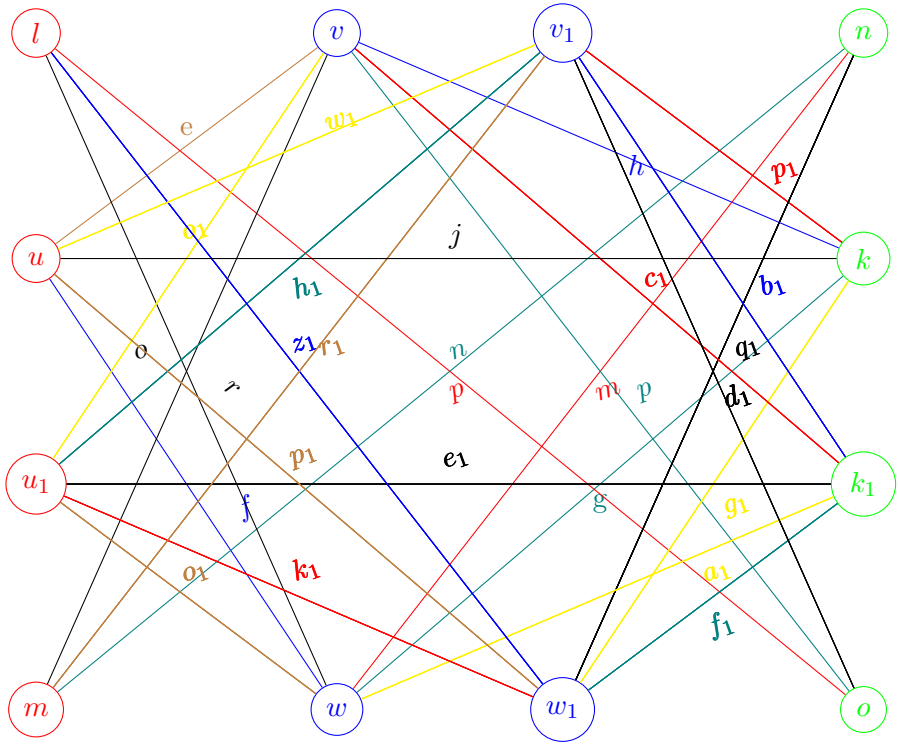
\captionof{figure}{\textbf{Six connected graph $\mathsf{Sq}_4$}}\label{Fig::3}

\vspace{0.5cm}

\vspace*{0.3cm}
Note that $\mathsf{Sq}_4$ has the same vertex chromatic number $\chi_v(\mathsf{Sq}_4)=3$, but its edge chromatic number is $\chi_e(\mathsf{Sq}_4)=6$, and it is six connected. But how did we obtain $\mathsf{Sq}_4$?

We will answer this in the next section \ref{Section:3}.

\vspace*{0.3cm}

\begin{remark}\label{Rem:1}
    There are three types of vertices in $\mathsf{Sq}_3$. That is, vertices with degrees 2, 3, and 4. Vertices with degree 3 are important to us. In \cite{RH24}, we used two operations overlay $+$ and connect $\to$ as one operator
\begin{align}
	& \Gamma_1+\Gamma_2:=\left(\Gamma_{1}^{0}\cup \Gamma_{2}^{0}, \Gamma_{1}^{1}\cup \Gamma_{2}^{1} \right) \\& \Gamma_1\to \Gamma_2:=\left(\Gamma_{1}^{0}\cup \Gamma_{2}^{0}, \Gamma_{1}^{1}\cup \Gamma_{2}^{1} \right).
	\end{align}
 for graphs $\Gamma_1=(\Gamma_{1}^{0},\Gamma_{1}^{1})$ and $\Gamma_2=(\Gamma_{2}^{0},\Gamma_{2}^{1})$. Here in this paper, we still will use these kinds of operator, but in a slightly different way!
\end{remark}
\begin{remark}\label{Rem:2}
    \begin{enumerate}[label=\roman*)]
        \item \label{Rem:2:1} Let $\Gamma_0$ be the null graph, and let $\Gamma=(\Gamma^{0},\Gamma^{1})\in\mathsf{G}_s/\{\Gamma_0\}$ be a graph with set of vertices $\Gamma^{0}$ and set of edges $\Gamma^{1}$, such that $\#\Gamma^{0}=n^2$. Then we have
        \begin{equation}\label{No.Edges}
            \#\Gamma^1=1+(n-1)(4n-3),
        \end{equation}
        for $n\in\{3,4,\cdots\}$.
        \item For $\Gamma$ as in the first part, let $\mathscr{H}_{\Gamma}$ be the set consisting of all Hamiltonian paths in $\Gamma$. Then the number of Hamiltonian paths will be as follows
        \begin{equation}\label{No.H.Paths}
            \#\mathscr{H}_{\Gamma}=10n+(2n-1)(2n-9).
        \end{equation}
    \end{enumerate}
\end{remark}
Note that later on, we will try to prove equations \ref{No.Edges} and \ref{No.H.Paths}. The proof is almost straightforward.
 \section{Graph colored algebra}\label{Section:3}
 Consider the set $\mathsf{C}_g$ consisting of a set of colors, a set of points (playing as the vertices of a graph) and a set of lines (playing as the edges of a graph). From $\mathrm{C}_g$, the colors red, green, blue and black are very important to us. Now consider the set of graphs $\mathsf{G}_c=\{\mathsf{G}_{c_i} ~|~ i\in\{3,4,\cdots\}\}$ with vertices arranged as two $(i-2)\times (i-2)$ and $2\times 2$ entangled surface lattices (here by a surface lattice we mean a lattice with points on its border) with vertices colored in four different colors red, blue, green, and black, satisfying the following rules.
 \begin{enumerate}
     \item The $2\times 2$ surface lattice will be the cover layer and the $(i-2)\times (i-2)$ surface lattice will be the inner layer of the graphs in $\mathsf{G}_c$. So, in general, the graphs in $\mathsf{G}_c$ will consist of only two layers!
     \item In $\mathsf{G}_{c_i}$ the number of source and sink vertices will be $i-2$ for each of them.
     \item The source and sink vertices will be in the blue category. That is, the blue vertices will provide us with the sink and the source vertices.
     \item Vertices are colored red, blue and green, but there is only $i$ number of each color in the graph $\mathsf{G}_{c_i}$.
     \item From these $i$ number of vertices, colored with one color, $(i-2)$ of them will be in each layer, outer and the inner layer.
     \item In each layer, the opposite color vertices other than the blue ones are two-sided connected, and the opposite color vertices from the outer layer, are in $\leftarrow$ and out $\rightarrow$ connected to the opposite color vertices of the inner layer.
 \end{enumerate}

 From these graphs, the null graph will be the one with no edges and all its vertices colored black. Let $\mathsf{G}_s$ be a subset of $\mathsf{G}_c$ consisting of only the null $n\times n$ graph, and the graphs colored blue, red and green, so that the vertices on the side edges are colored red and green, with each edge colored with just one color, and the other vertices are colored blue, as illustrated in Figures \ref{Fig::2} and \ref{Fig::3}.

 \begin{remark}
    For $n\in\{3,4,\cdots\}$, the vertices of the graphs in $\mathsf{G}_s$ possess three kinds of degrees. The highest degree is $2n-2$, and the middle is $2n-3$. However, the smallest degree is $n-1$. Consider these sets of vertices with $V_1, V_2$, and $V_3$ respectively.
 \end{remark}

 As stated in Remark \ref{Rem:1}, here in this paper we still will use the connect and overlay operators, but this time the definition will be a little closer to their original definitions, and they could be determined as follows.

 \begin{defn}\label{Def::2:}
     Let $V_b, V_g, V_r$ be the set of blue, green and red vertices, respectively, and let $\Gamma_1, \Gamma_2$ be in $\mathsf{G}_s$. Then the connect and overlay operators will be defined as follows for graphs $\Gamma_1=(\Gamma_{1}^{0},\Gamma_{1}^{1})$ and $\Gamma_2=(\Gamma_{2}^{0},\Gamma_{2}^{1})$.
     \begin{align}
	& \Gamma_1+\Gamma_2:=\left(\Gamma_{1}^{0}\cup \Gamma_{2}^{0}, \Gamma_{1}^{1}\cup \Gamma_{2}^{1} \right)\label{Over:1} \\& \Gamma_1\to \Gamma_2:=\left(\Gamma_{1}^{0}\cup \Gamma_{2}^{0}, (\Gamma_{1}^{1}\cup \Gamma_{2}^{1})/\{V_b\to V_b ~\& ~ V_r\to V_r ~\& ~ V_g\to V_g \} \right),\label{Connect:1}\\& \Gamma_0 \to \Gamma:=(\Gamma^0,\Gamma^1), \qquad \forall \Gamma=(\Gamma^0,\Gamma^1)\in\mathsf{G}_s \ \text{and}\ \Gamma_0\ \text{the null graph}. \label{Null:1}
	\end{align}
 \end{defn}
 \begin{remark}
     The relation \ref{Null:1} means that, under the overlay operator, the isolated vertices will be omitted.
 \end{remark}

 \begin{prop}
     The set $\mathsf{G}_s$ has a unital nondegenerate $*$-monoid algebra structure equipped with binary operations \ref{Over:1}, \ref{Connect:1}, and \ref{Null:1} defined in Definition \ref{Def::2:}, with the identity element $\Gamma_0$, and the diagrammatic illustration as in \cite[Figure 1]{R242}.
 \end{prop}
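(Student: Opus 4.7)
The plan is to verify, one by one, the defining axioms of a unital nondegenerate $*$-monoid algebra on $\mathsf{G}_s$ using the explicit formulas \ref{Over:1}, \ref{Connect:1}, and \ref{Null:1}. First I would check closure: for $\Gamma_1,\Gamma_2\in\mathsf{G}_s$, both $\Gamma_1+\Gamma_2$ and $\Gamma_1\to\Gamma_2$ must lie again in $\mathsf{G}_s$. Closure under $+$ is immediate since a union of vertex- and edge-sets preserves each defining constraint of Section \ref{Section:3} (the two-layer structure, the chromatic assignment of the border, and the three-color vertex restriction). Closure under $\to$ is slightly more delicate because the quotient by $\{V_b\to V_b,\ V_r\to V_r,\ V_g\to V_g\}$ actively removes monochromatic edges, but this deletion can only shrink the edge set and cannot violate the prescribed coloring.

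Next I would record the monoid structure: associativity and commutativity of $+$ are inherited from set union on both vertex and edge sets, and the null graph $\Gamma_0$ acts as a two-sided unit via \ref{Null:1} together with $\Gamma_0^0=\Gamma_0^1=\emptyset$, so that $\Gamma+\Gamma_0=\Gamma=\Gamma_0+\Gamma$ and similarly for $\to$. For the $*$-structure, I would define $\Gamma^*$ by the complex-conjugate-transpose action on the adjacency matrix of $\Gamma$; since the underlying graphs are undirected with $\{0,1\}$-valued entries, this reduces to an involutive relabeling of the palette that fixes red, green, and blue individually. One then verifies $(\Gamma_1+\Gamma_2)^*=\Gamma_1^*+\Gamma_2^*$, $(\Gamma_1\to\Gamma_2)^*=\Gamma_2^*\to\Gamma_1^*$, $(\Gamma^*)^*=\Gamma$, and $\Gamma_0^*=\Gamma_0$ directly on vertex and edge sets. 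Nondegeneracy is handled by observing that for any $\Gamma\in\mathsf{G}_s\setminus\{\Gamma_0\}$, choosing $\Gamma'=\Gamma$ yields $\Gamma\to\Gamma\neq\Gamma_0$, since $\Gamma$ must contain at least one heterochromatic edge by the border-coloring rules, and such an edge survives the quotient in \ref{Connect:1}.

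The main obstacle I anticipate is the associativity of $\to$: because the quotient by monochromatic pairs is applied at every step, one must rule out the scenario where an edge deleted in one bracketing fails to be deleted, or is spuriously reintroduced, in the other. Concretely, for $(\Gamma_1\to\Gamma_2)\to\Gamma_3$ versus $\Gamma_1\to(\Gamma_2\to\Gamma_3)$, a case analysis on the color pairs of endpoints — together with the key observation that the deletion rule is determined purely by the colors of endpoints and not by which graph the edge originated in — should show that both bracketings produce the same final edge set. Matching this conclusion against the diagrammatic conventions of \cite[Figure 1]{R242} then yields the claimed $*$-monoid algebra structure with identity $\Gamma_0$.
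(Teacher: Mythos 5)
Your proposal follows the same basic route as the paper --- direct verification of the axioms --- but it is considerably more complete: the paper's own proof consists of exactly two observations, namely that $\Gamma_0$ is the identity (by \ref{Null:1}) and that $\mathsf{G}_s$ is closed under \ref{Over:1} and \ref{Connect:1}, with everything else (associativity of $\to$, the $*$-involution, nondegeneracy) left unverified. Your decision to treat associativity of $\to$ as the main obstacle, and to resolve it by noting that the deletion rule depends only on endpoint colors and not on provenance, is exactly the point the paper skips, and your argument for it is sound under the literal reading of \ref{Connect:1}.

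Two cautions about the added detail. First, the $*$-structure you supply is your own construction: the paper nowhere defines an involution on $\mathsf{G}_s$, so your conjugate-transpose-of-adjacency-matrix definition (which is the identity on undirected $\{0,1\}$-graphs) is a reasonable guess but cannot be checked against the text; the same applies to your ad hoc notion of nondegeneracy. Second, your identity check assumes $\Gamma_0^0=\Gamma_0^1=\emptyset$, whereas the paper defines the null graph as having all its vertices present but colored black, and only recovers $\Gamma_0+\Gamma=\Gamma$ via the separate convention (stated in the remark following Definition \ref{Def::2:}) that isolated vertices are omitted. Relatedly, the paper advertises \ref{Connect:1} as being ``closer to the original'' connect operator, which ordinarily adjoins all cross edges between the two vertex sets before deleting the monochromatic ones; if that is the intended reading, your closure and associativity analysis, which treats $\to$ as pure deletion from $\Gamma_1^1\cup\Gamma_2^1$, would need to be redone to track the newly created heterochromatic cross edges. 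None of this is a flaw in your reasoning so much as a consequence of underspecified definitions, but it is worth flagging which reading you are committing to.
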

 \begin{proof}
     Using the relation \ref{Null:1}, it is easy to see that $\Gamma_0$ is the identity element.

     It is not too difficult to see that for graphs $\Gamma_1$ and $\Gamma_2$ in $\mathsf{G}_s$, using the operations \ref{Over:1} and \ref{Connect:1} will result another graph in $\mathsf{G}_s$. 
 \end{proof}
 \subsection{$C^*$-colored graph algebras}
In order to move to the $C^*$-graph algebra case, we need to work with the directed graphs. Hence, we need to make some changes in our graphs in $\mathsf{G}_s$. Note that the relations from the Definition \ref{Def::2:}, already provide some suggestions. So, we proceed as follows.

\begin{remark}
    In order to proceed smoothly, let us have some notation. 
    \begin{enumerate}
        \item Consider the outer layer with $\mathscr{L}_o$ and the inner layer with $\mathscr{L}_i$. Consider the red and green vertices in the outer layer with $r_{\mathscr{L}_o}$ and $g_{\mathscr{L}_o}$ respectively. On the other hand, consider the red and green vertices in the inner layer with $r_{\mathscr{L}_i}$ and $g_{\mathscr{L}_i}$ respectively.
        \item In each graph $\mathsf{G}_{c_i}$, there are $(i-2)$ vertices of each color in each layer. Consider the set consisting of these red and green vertices with $O_r:=\{O_{r_j} ~|~ j\in\{1,\cdots,i-2\}\}, O_g:=\{O_{g_j} ~|~ j\in\{1,\cdots,i-2\}\}, I_r:=\{I_{r_j} ~|~ j\in\{1,\cdots,i-2\}  \},$ and $I_g:=\{I_{g_j} ~|~ j\in\{1,\cdots,i-2\}\}$, in outer and inner layers, respectively.
    \end{enumerate}
    
\end{remark}

\begin{defn}
    The orientation of the graphs in $\mathsf{G}_s$ will be naturally defined as follows 
    \begin{enumerate}
        \item The vertices of the graph $\Gamma\in\mathsf{G}_s$ will be connected unless they are in the same color category.
        \item $r_{\mathscr{L}_i} \leftrightarrow g_{\mathscr{L}_i} \ \& \ r_{\mathscr{L}_o} \leftrightarrow g_{\mathscr{L}_o},$
        \item $I_{g_{j}} \rightarrow O_{r_{j'}} \ \& \ O_{r_{j''}} \rightarrow I_{g_{j}},$ 
       \item $I_{r_{j}} \rightarrow O_{g_{j'}} \ \& \ O_{g_{j''}} \rightarrow I_{r_{j}}.$ 
    \end{enumerate}
    for $j''\neq j\neq j'$ (meaning that the above connections are between different vertices with different colors).
\end{defn}
Therefore, for example, for the graph $Sq_3$, Figure \ref{Fig::2} will change as follows. 

\vspace{0.5cm}

\vspace*{0.3cm}

\hspace*{2.4cm} \begin{tikzpicture}\label{Gra:2}
\tikzset{vertex/.style = {shape=circle,draw,minimum size=0.7em}}
\tikzset{edge/.style = {->,> = latex'}}
\textcolor{green}{\node[vertex] (a) at  (1,0) {$v_4$};}
\textcolor{blue}{\node[vertex] (b) at  (4,2) {$v_2$};}
\textcolor{red}{\node[vertex] (c) at  (7,0) {$v_3$};}
\textcolor{blue}{\node[vertex] (d) at  (4,-2) {$v_1$};}
\textcolor{red}{\node[vertex] (e) at  (0,3) {$v_8$};}
\textcolor{red}{\node[vertex] (f) at  (0,-3) {$v_5$};}
\textcolor{green}{\node[vertex] (g) at  (8,3) {$v_7$};}
\textcolor{green}{\node[vertex] (h) at  (8,-3) {$v_6$};}
\textcolor{black}{\draw[edge] (a) to node[above] {\hspace{0.2cm}\begin{turn}{20}$e_{1}$\end{turn}} (b);}
\textcolor{black}{\draw[edge] (f) to node[below] {\hspace{-1.7cm}\begin{turn}{-360}$e_2$\end{turn}} (h);}
\textcolor{black}{\draw[edge] (h) to node[below] {\hspace{1.7cm}\begin{turn}{-360}$e_3$\end{turn}} (f);}
\textcolor{blue}{\draw[edge] (e) to node[above] {\hspace{-1.7cm}\begin{turn}{-360}$e_4$\end{turn}} (g);}
\textcolor{blue}{\draw[edge] (g) to node[above] {\hspace{1.7cm}\begin{turn}{-360}$e_5$\end{turn}} (e);}
\textcolor{red}{\draw[edge] (c) to node[below] {\hspace{1.0cm}\begin{turn}{-360}$e_6$\end{turn}} (b);}
\textcolor{black}{\draw[edge] (c) to node[right] {\hspace{0.0cm}\begin{turn}{-360}$e_7$\end{turn}} (g);}
\textcolor{brown}{\draw[edge] (h) to node[right] {\hspace{0.0cm}\begin{turn}{-360}$e_8$\end{turn}} (c);}
\textcolor{teal}{\draw[edge] (a) to node[left] {\hspace{1.0cm}\begin{turn}{-360}$e_9$\end{turn}} (e);}
\textcolor{red}{\draw[edge] (f) to node[left] {\hspace{1.0cm}\begin{turn}{-360}$e_{10}$\end{turn}} (a);}
\textcolor{brown}{\draw[edge] (e) to node[below] {\hspace{0.0cm}\begin{turn}{-360}$e_{11}$\end{turn}} (b);}
\textcolor{yellow}{\draw[edge] (g) to node[below] {\hspace{0.0cm}\begin{turn}{-360}$e_{12}$\end{turn}} (b);}
\textcolor{yellow}{\draw[edge] (d) to node[above] {\hspace{0.0cm}\begin{turn}{-360}$e_{13}$\end{turn}} (f);}
\textcolor{blue}{\draw[edge] (d) to node[above] {\hspace{0.0cm}\begin{turn}{-360}$e_{14}$\end{turn}} (h);}
\textcolor{brown}{\draw[edge] (d) to node[below] {\hspace{1.6cm}\begin{turn}{20}$e_{15}$\end{turn}} (a);}
\textcolor{teal}{\draw[edge] (d) to node[below] {\hspace{0.2cm}\begin{turn}{20}$e_{16}$\end{turn}} (c);}
\textcolor{teal}{\draw[edge] (g) to node[above] {\hspace{0.2cm}\begin{turn}{20}$e_{17}$\end{turn}} (f);}
\textcolor{teal}{\draw[edge] (f) to node[below] {\hspace{-0.7cm}\begin{turn}{20}$e_{18}$\end{turn}} (g);}
\textcolor{red}{\draw[edge] (d) to node[below] {\hspace{0.2cm}\begin{turn}{20}$e_{19}$\end{turn}} (g);}
\textcolor{black}{\draw[edge] (d) to node[above] {\hspace{-1.2cm}\begin{turn}{20}$e_{20}$\end{turn}} (e);}
\textcolor{red}{\draw[edge] (e) to node[below] {\hspace{0.2cm}\begin{turn}{20}$e_{21}$\end{turn}} (h);}
\textcolor{red}{\draw[edge] (h) to node[above] {\hspace{-0.7cm}\begin{turn}{20}$e_{22}$\end{turn}} (e);}
\textcolor{blue}{\draw[edge] (f) to node[below] {\hspace{1.2cm}\begin{turn}{320}$e_{23}$\end{turn}} (b);}
\textcolor{teal}{\draw[edge] (h) to node[below] {\hspace{1.2cm}\begin{turn}{20}$e_{24}$\end{turn}} (b);}



\textcolor{blue}{\draw[edge] (a) to node[above] {\hspace{3cm}\begin{turn}{0}$e_{25}$\end{turn}} (c);}
\textcolor{blue}{\draw[edge] (c) to node[above] {\hspace{-3cm}\begin{turn}{0}$e_{26}$\end{turn}} (a);}

\end{tikzpicture}

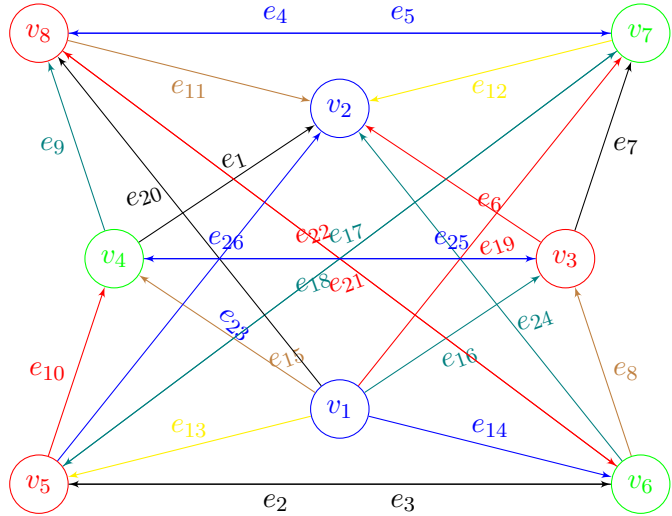
\captionof{figure}{\textbf{Five connected directed graph $\mathsf{Sq}_{3}^{d}$}}\label{Fig::4}

\vspace{1cm}

The next step is to determine the Hamiltonian paths involved in $\mathsf{Sq}_{3}^{d}$.
\begin{lemma}\label{Lem::3}
     $\mathsf{Sq}_{3}^{d}$ has 28 Hamiltonian paths.
\end{lemma}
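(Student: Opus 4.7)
The plan is to verify the count $28$ by a structured enumeration that exploits the symmetries built into the construction of $\mathsf{Sq}_3^d$ from Section \ref{Section:3}. First I would fix a precise adjacency record by reading off the 26 directed edges $e_1,\dots,e_{26}$ from Figure \ref{Fig::4} and forming, for each vertex $v_i$, the out-neighborhood $N^+(v_i)$ and in-neighborhood $N^-(v_i)$. This transforms the geometric picture into a purely combinatorial object on which one can do a clean depth-first enumeration.

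Second, I would extract the symmetries. The orientation rules in Definition 3.2 are equivariant under the involution $\sigma$ that swaps the two blue vertices $v_1 \leftrightarrow v_2$ together with the matching swap on the outer/inner pairs of the red and green vertex classes $\{r_{\mathscr L_o}, r_{\mathscr L_i}\}$ and $\{g_{\mathscr L_o}, g_{\mathscr L_i}\}$. A Hamiltonian path $P$ maps under $\sigma$ to another Hamiltonian path $\sigma(P)$, so the set $\mathscr{H}_{\mathsf{Sq}_3^d}$ decomposes into $\sigma$-orbits. I would also check whether reversing all arrows (the map $e_k \mapsto e_k^{\mathrm{op}}$ whenever present) gives a graph automorphism; if so, this provides a second symmetry cutting the enumeration further. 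Using these symmetries, I only need to enumerate Hamiltonian paths starting from a representative of each orbit of starting vertices.

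Third, I would do the enumeration itself by case analysis on the starting vertex $v$. For each representative $v$, I perform a backtracking depth-first search: maintain a partial path $(v = u_0, u_1, \ldots, u_k)$ of distinct vertices, and at each step either extend by some $w \in N^+(u_k) \setminus \{u_0,\dots,u_k\}$ or backtrack when no such $w$ exists; record the path when $k = 7$. Multiplying each representative's count by the orbit size and summing should give the total. I would double-check by partitioning $\mathscr{H}_{\mathsf{Sq}_3^d}$ according to which blue vertex is visited first (this is a natural pivot because the blue vertices realize the source/sink role imposed by the rules of $\mathsf{G}_s$), and verifying that the two partition sums are consistent with each other and with $28$.

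The main obstacle is bookkeeping rather than insight: with 8 vertices and 26 directed edges, the search tree has a large branching factor near the root and a thin but subtle tail near the leaves (where most extensions fail), so it is easy to either miss a path by premature pruning or double-count one by mis-applying a symmetry. To guard against both errors, I would encode each Hamiltonian path as a sequence of edge labels $(e_{i_1}, \ldots, e_{i_7})$ and store the resulting $7$-tuples in a set to enforce irredundancy, then check that the recovered count is exactly $28$ and is invariant under the symmetries identified above.
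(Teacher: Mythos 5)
Your proposal describes an enumeration procedure but never carries it out, so as written it does not prove the lemma: the number $28$ enters only as a target to be ``checked'' at the end, and no list of paths, no completed count, and no completeness argument is actually produced. The paper's own proof is simply an explicit list of the Hamiltonian paths of $\mathsf{Sq}_{3}^{d}$ (written in the vertex labels $u,v,w,k,l,m,n,o$ of the undirected model), every one of which runs from $w$ to $v$; whatever one thinks of the fact that the paper asserts rather than argues completeness, it at least exhibits the objects being counted. To turn your plan into a proof you must either produce the explicit list yourself or give a closed combinatorial argument; a description of a depth-first search is not a substitute for its output.

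Two specific ingredients of your plan would fail if executed literally. First, the involution $\sigma$ swapping the two blue vertices $v_1 \leftrightarrow v_2$ cannot be an automorphism of the directed graph $\mathsf{Sq}_{3}^{d}$: reading the edge list off the figure, $v_1$ is the unique source (in-degree $0$, out-degree $6$) and $v_2$ the unique sink (out-degree $0$, in-degree $6$), so $\sigma$ exchanges these roles and does not preserve edge orientation. Only the composition of $\sigma$ with global arrow reversal could possibly be a symmetry, and that must be verified against the actual $26$ directed edges rather than inferred from the coloring rules of Definition \ref{Def::2:}. Second, your case analysis over starting vertices is vacuous for the same reason: a vertex of in-degree $0$ can only be the first vertex of a Hamiltonian path and a vertex of out-degree $0$ only the last, so every Hamiltonian path of $\mathsf{Sq}_{3}^{d}$ runs from $v_1$ to $v_2$ and there is exactly one root for the search. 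With these corrections, your backtracking enumeration from $v_1$, with paths recorded as edge-label tuples in a set to avoid duplication, is indeed the right method --- and the deduplication step is not idle, since the list printed in the paper contains visibly repeated entries --- but the lemma is only established once that search is actually performed and its result exhibited.
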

\begin{proof}
    It is easy to observe that the Hamiltonian paths of $\mathsf{Sq}_{3}^{d}$ are as follows.

    \begin{align*}
        &w\to o\to l\to n\to m\to u\to k\to v,\quad w\to m\to n\to l\to o\to k\to u\to v,\\& w\to k\to u\to l\to o\to m\to n\to v,\quad w\to u\to k\to n\to m\to o\to l\to v.
    \end{align*}

    \begin{align*}
    & w\to n\to m\to u\to l\to o\to k\to v,\quad w\to n\to m\to o\to k\to u\to l\to v, \\& w\to k\to n\to m\to u\to l\to o\to v,\quad w\to l\to n\to m\to o\to k\to u\to v,\\&w\to o\to k\to n\to m\to u\to l\to v,\quad w\to u\to l\to n\to m\to o\to k\to v,\\&w\to l\to o\to k\to n\to m\to u\to v,\quad w\to k\to u\to l\to n\to m\to o\to v,\\& w\to u\to l\to o\to k\to n\to m\to v,\quad w\to o\to k\to u\to l\to n\to m\to v,\\&w\to m\to u\to l\to o\to k\to n\to v,\quad w\to m\to o\to k\to u\to l\to n\to v,\\&
    \end{align*}

    \begin{align*}
    &w\to u\to l\to o\to k\to n\to m\to v,\quad w\to u\to k\to n\to l\to o\to m\to v,\\&w\to m\to u\to l\to o\to k\to n\to v,\quad w\to m\to u\to k\to n\to l\to o\to v,\\&w\to n\to m\to u\to l\to o\to k\to v,\quad w\to o\to m\to u\to k\to n\to l\to v,\\&w\to k\to n\to m\to u\to l\to o\to v,\quad w\to l\to o\to m\to u\to k\to n\to v,\\&w\to o\to k\to n\to m\to u\to l\to v,\quad w\to n\to l\to o\to m\to u\to k\to v,\\&w\to l\to o\to k\to n\to m\to u\to v,\quad w\to k\to n\to l\to o\to m\to u\to v,\\&
    \end{align*}

\end{proof}
\begin{remark}
    Note that even in the above set of Hamiltonian paths, some sort of symmetry is apparent!
\end{remark}

The plan is to associate each special edge involved in each special Hamiltonian path a partial (matrix) isometry $\mathscr{S}_i$ and to each vertex an orthogonal projection $\mathscr{P}_i$, for $i\in I$ the ordered set, to see if they satisfy the relations of graph Cuntz-Krieger families, and if they produce infinite or finite graph $C^*$-algebras.

Recall that a graph $C^*$-algebra of a finite directed graph $\Gamma=(\Gamma^0,\Gamma^1)$ will be defined as follows.
\begin{defn}\label{Defn::1...}
For a finite directed graph $\Gamma$, and a finite or infinite dimensional Hilbert space $\mathcal{H}$, the set of mutually orthogonal projections $p_v\in\mathcal{H}$ for all $v\in\Gamma^0$ together with partial isometries $s_e\in\mathcal{H}$ for all $e\in\Gamma^1$ satisfying the relations

\begin{enumerate}\label{Equ:S}
    \item $s_{e}^{*}s_e=p_{r(e)}$ for all edges $e\in\Gamma^1$,\label{Equ:S:1}
    \item $p_v=\sum_{s(e)=v}s_es_{e}^{*}$ for the case when $v\in\Gamma^0$ is not a sink,\label{Equ:S:2}
\end{enumerate}
will be called a Cuntz-Krieger $\Gamma$-family in $C^*$-algebra $\mathcal{C}$.
\end{defn}
We have the following definition.
\begin{defn}\cite{BEVW22}\label{Def:GCA}
    For a finite directed graph $\Gamma=(\Gamma^0,\Gamma^1)$, the graph $C^*$-algebra $\mathcal{C}^*(\Gamma)$ is the universal $C^*$-algebra generated by a Cuntz-Krieger $\Gamma$-family $\{P_v,S_e\}$.
\end{defn}
Note that in our case, we will mostly focus on the $n\times n$ matrices with the entries in $\{0,1\}$. By considering the relations from the definition \ref{Defn::1...} of a Cuntz-Krieger $\Gamma$-family in $\mathcal{H}$, of partial isometries $S_i$ and orthogonal projections $P_i$, for $i\in\{1,\cdots,n\}$, with mutually orthogonal ranges, we will have the following Cuntz-Krieger relations
\begin{equation}\label{Equ:KC}
s_{i}^{*}s_i=\sum_{j=1}^{n}a_{ij}s_js_{j}^{*}.
\end{equation}

Note that we will only consider the nondegenerate $*$-algebras, and hence $C^*$-algebras. We have the following definition.
\begin{defn}\cite{R242}\label{Def::..5}
    For the $n\times n$ matrix $\Pi\in M_n(0,1)$, the Cuntz-Krieger algebra $\mathcal{K}_{\Pi}$ will be defined as the (nondegenerate) $C^*$-algebra generated by a universal Cuntz-Krieger $\Gamma$-family $S_i$ for $i\in\{1,\cdots,n\}$ that is satisfied in \ref{Equ:KC}.
\end{defn}

Following definition \ref{Def::..5}, note that the adjacency matrix $\mathcal{A}_{3}^{d}$ of $\mathsf{Sq}_{3}^{d}$ is degenerate, as can be seen below.

$$\mathcal{A}_{3}^{d}=\begin{bmatrix}
    0 & 1 & 1 & 1 & 1 & 1 & 1 & 0\\ 0 & 0 & 1 & 0 & 0 & 1 & 0 & 1\\ 0 & 1 & 0 & 0 & 0 & 0 & 1 & 1\\ 0 & 0 & 1 & 0 & 1 & 1 & 0 & 1 \\ 0 & 1 & 0 & 1 & 0 & 0 & 1 & 1\\ 0 & 0 & 0 & 1 & 0 & 0 & 1 & 1 \\ 0 & 0 & 0 & 0 & 1 & 1 & 0 & 1\\ 0 & 0 & 0 & 0 & 0 & 0 & 0 & 0
\end{bmatrix},$$
and has been pointed out above, we will only work with nondegenerate graph algebras coming from the nondegenerate adjacency matrices. Hence, let us take a look at the possible commuting matrices of $\mathcal{A}_{3}^{d}$ and call it $\prescript{c}{}{\mathcal{A}}_{3}^{d}$, and it could be as follows, for $p,q$ some commuting or not commuting numerical parameters.

$$\prescript{c}{}{\mathcal{A}}_{3}^{d}=\begin{bmatrix}
    1 & 0 & 0 & 0 & 0 & 0 & 0 & 0\\ 0 & 0 & 0 & p & 0 & 0 & 1-p & 0\\ 0 & 0 & 0 & 0 & q & 1-q & 0 & 0\\ 0 & 1-p & 0 & 0 & 0 & 0 & p & 0 \\ 0 & 0 & 1-q & 0 & 0 & q & 0 & 0\\ 0 & 0 & q & 0 & 1-q & 0 & 0 & 0 \\ 0 & p & 0 & 1-p & 0 & 0 & 0 & 0\\ 0 & 0 & 0 & 0 & 0 & 0 & 0 & 1
\end{bmatrix},$$

Anyway, it is not too difficult to observe that we have $\mathcal{A}_{3}^{d}\cdot \prescript{c}{}{\mathcal{A}}_{3}^{d} = \prescript{c}{}{\mathcal{A}}_{3}^{d} \cdot \mathcal{A}_{3}^{d}$ only in the case where we have $p=q$. So, $\mathcal{A}_{3}^{d}$ does not possess any quantum symmetries, and for $p\in\{0,1\}$ we will get two different commuting matrices.

 But before considering the $C^*$-graph algebras, we have to find out if the same arguments work for the oriented $Sq_4$. The oriented version of $Sq_4$ will proceed as follows.

\vspace*{0.5cm}


\begin{tikzpicture}\label{Gra:2}
\tikzset{vertex/.style = {shape=circle,draw,minimum size=0.7em}}
\tikzset{edge/.style = {->,> = latex'}}
\textcolor{green}{\node[vertex] (a) at  (-4,-4) {$v_5$};}
\textcolor{blue}{\node[vertex] (b) at  (0,4) {$v_4$};}
\textcolor{red}{\node[vertex] (c) at  (8,0) {$v_6$};}
\textcolor{blue}{\node[vertex] (d) at  (4,-8) {$v_1$};}
\textcolor{red}{\node[vertex] (e) at  (-5,5) {$v_7$};}
\textcolor{red}{\node[vertex] (f) at  (-5,-9) {$v_8$};}
\textcolor{green}{\node[vertex] (g) at  (9,5) {$v_9$};}
\textcolor{green}{\node[vertex] (h) at  (9,-9) {$v_{10}$};}
\textcolor{red}{\node[vertex] (h_1) at  (4,4) {$v_{11}$};}
\textcolor{blue}{\node[vertex] (v_3) at  (-4,0) {$v_3$};}
\textcolor{green}{\node[vertex] (h_4) at  (0,-8) {$v_{12}$};}
\textcolor{blue}{\node[vertex] (v_2) at  (8,-4) {$v_2$};}
\textcolor{cyan}{\draw[edge] (v_2) to node[left] {\hspace{0cm}\begin{turn}{-360}$e_1$\end{turn}} (c);}
\textcolor{brown}{\draw[edge] (v_2) to node[left] {\hspace{0cm}\begin{turn}{-360}$e_2$\end{turn}} (h);}
\textcolor{blue}{\draw[edge] (v_2) to node[right] {\hspace{0cm}\begin{turn}{-360}$e_3$\end{turn}} (g);}
\textcolor{purple}{\draw[edge] (g) to node[above] {\hspace{0cm}\begin{turn}{-360}$e_4$\end{turn}} (b);}
\textcolor{yellow}{\draw[edge] (g) to node[above] {\hspace{1.7cm}\begin{turn}{-360}$e_{5}$\end{turn}} (e);}
\textcolor{yellow}{\draw[edge] (e) to node[above] {\hspace{-1.7cm}\begin{turn}{-360}$e_6$\end{turn}} (g);}
\textcolor{purple}{\draw[edge] (h) to node[right] {\hspace{0cm}\begin{turn}{-360}$e_{7}$\end{turn}} (c);}
\textcolor{cyan}{\draw[edge] (d) to node[below] {\hspace{0cm}\begin{turn}{-360}$e_{8}$\end{turn}} (f);}
\textcolor{purple}{\draw[edge] (d) to node[below] {\hspace{0cm}\begin{turn}{-360}$e_{9}$\end{turn}} (h_1);}
\textcolor{yellow}{\draw[edge] (h) to node[below] {\hspace{1.7cm}\begin{turn}{-360}$e_{10}$\end{turn}} (f);}
\textcolor{yellow}{\draw[edge] (f) to node[below] {\hspace{-1.7cm}\begin{turn}{-360}$e_{11}$\end{turn}} (h);}
\textcolor{purple}{\draw[edge] (e) to node[left] {\hspace{0cm}\begin{turn}{-360}$e_{12}$\end{turn}} (a);}
\textcolor{teal}{\draw[edge] (f) to node[left] {\hspace{0cm}\begin{turn}{-360}$e_{13}$\end{turn}} (v_3);}
\textcolor{cyan}{\draw[edge] (g) to node[below] {\hspace{0cm}\begin{turn}{-360}$e_{14}$\end{turn}} (v_3);}
\textcolor{teal}{\draw[edge] (h_4) to node[right] {\hspace{0cm}\begin{turn}{-360}$e_{15}$\end{turn}} (b);}
\textcolor{cyan}{\draw[edge] (h_4) to node[right] {\hspace{0cm}\begin{turn}{-360}$e_{16}$\end{turn}} (e);}
\textcolor{cyan}{\draw[edge] (h) to node[right] {\hspace{0cm}\begin{turn}{-360}$e_{17}$\end{turn}} (h_1);}
\textcolor{blue}{\draw[edge] (h) to node[right] {\hspace{0cm}\begin{turn}{-360}$e_{18}$\end{turn}} (v_3);}
\textcolor{yellow}{\draw[edge] (c) to node[below] {\hspace{0cm}\begin{turn}{-360}$e_{19}$\end{turn}} (v_3);}
\textcolor{red}{\draw[edge] (h_1) to node[right] {\hspace{0cm}\begin{turn}{-360}$e_{20}$\end{turn}} (v_3);}
\textcolor{purple}{\draw[edge] (h_4) to node[right] {\hspace{-0.9cm}\begin{turn}{-360}$e_{21}$\end{turn}} (v_3);}
\textcolor{black}{\draw[edge] (h_4) to node[below] {\hspace{0.8cm}\begin{turn}{-360}$e_{22}$\end{turn}} (h_1);}
\textcolor{blue}{\draw[edge] (h_1) to node[above] {\hspace{-0.5cm}\begin{turn}{-360}$e_{23}$\end{turn}} (h_4);}
\textcolor{cyan}{\draw[edge] (a) to node[above] {\hspace{-0.5cm}\begin{turn}{-360}$e_{24}$\end{turn}} (b);}
\textcolor{purple}{\draw[edge] (v_2) to node[below] {\hspace{0.0cm}\begin{turn}{-360}$e_{25}$\end{turn}} (f);}
\textcolor{teal}{\draw[edge] (v_2) to node[below] {\hspace{0.0cm}\begin{turn}{-360}$e_{26}$\end{turn}} (e);}
\textcolor{yellow}{\draw[edge] (v_2) to node[below] {\hspace{0.0cm}\begin{turn}{-360}$e_{27}$\end{turn}} (a);}
\textcolor{black}{\draw[edge] (v_2) to node[right] {\hspace{0.0cm}\begin{turn}{-360}$e_{28}$\end{turn}} (h_1);}
\textcolor{red}{\draw[edge] (v_2) to node[left] {\hspace{0.0cm}\begin{turn}{-360}$e_{29}$\end{turn}} (h_4);}
\textcolor{red}{\draw[edge] (c) to node[below] {\hspace{1.0cm}\begin{turn}{-360}$e_{30}$\end{turn}} (b);}
\textcolor{teal}{\draw[edge] (c) to node[left] {\hspace{0.0cm}\begin{turn}{-360}$e_{31}$\end{turn}} (g);}
\textcolor{brown}{\draw[edge] (a) to node[right] {\hspace{0.0cm}\begin{turn}{-360}$e_{32}$\end{turn}} (v_3);}
\textcolor{black}{\draw[edge] (e) to node[right] {\hspace{0.0cm}\begin{turn}{-360}$e_{33}$\end{turn}} (v_3);}
\textcolor{red}{\draw[edge] (f) to node[right] {\hspace{0cm}\begin{turn}{-360}$e_{34}$\end{turn}} (a);}
\textcolor{brown}{\draw[edge] (e) to node[above] {\hspace{0.0cm}\begin{turn}{-360}$e_{35}$\end{turn}} (b);}
\textcolor{yellow}{\draw[edge] (h_1) to node[below] {\hspace{0.0cm}\begin{turn}{-360}$e_{36}$\end{turn}} (b);}
\textcolor{brown}{\draw[edge] (g) to node[below] {\hspace{0.0cm}\begin{turn}{-360}$e_{37}$\end{turn}} (h_1);}
\textcolor{brown}{\draw[edge] (h_4) to node[above] {\hspace{0.0cm}\begin{turn}{-360}$e_{38}$\end{turn}} (f);}
\textcolor{yellow}{\draw[edge] (d) to node[above] {\hspace{0.0cm}\begin{turn}{-360}$e_{39}$\end{turn}} (h_4);}
\textcolor{teal}{\draw[edge] (d) to node[above] {\hspace{0.0cm}\begin{turn}{-360}$e_{40}$\end{turn}} (h);}
\textcolor{brown}{\draw[edge] (d) to node[below] {\hspace{1.6cm}\begin{turn}{20}$e_{41}$\end{turn}} (a);}
\textcolor{black}{\draw[edge] (d) to node[below] {\hspace{0cm}\begin{turn}{20}$e_{42}$\end{turn}} (c);}
\textcolor{black}{\draw[edge] (g) to node[above] {\hspace{0.4cm}\begin{turn}{20}$e_{43}$\end{turn}} (f);}
\textcolor{black}{\draw[edge] (f) to node[below] {\hspace{-0.2cm}\begin{turn}{20}$e_{44}$\end{turn}} (g);}
\textcolor{red}{\draw[edge] (d) to node[below] {\hspace{0.2cm}\begin{turn}{20}$e_{45}$\end{turn}} (g);}
\textcolor{blue}{\draw[edge] (d) to node[above] {\hspace{-1.2cm}\begin{turn}{20}$e_{46}$\end{turn}} (e);}
\textcolor{red}{\draw[edge] (e) to node[right] {\hspace{0.2cm}\begin{turn}{20}$e_{47}$\end{turn}} (h);}
\textcolor{red}{\draw[edge] (h) to node[left] {\hspace{-1cm}\begin{turn}{20}$e_{48}$\end{turn}} (e);}
\textcolor{blue}{\draw[edge] (f) to node[below] {\hspace{0.5cm}\begin{turn}{320}$e_{49}$\end{turn}} (b);}
\textcolor{black}{\draw[edge] (h) to node[below] {\hspace{1.2cm}\begin{turn}{20}$e_{50}$\end{turn}} (b);}



\textcolor{blue}{\draw[edge] (a) to node[above] {\hspace{3cm}\begin{turn}{0}$e_{51}$\end{turn}} (c);}
\textcolor{blue}{\draw[edge] (c) to node[below] {\hspace{-5.5cm}\begin{turn}{0}$e_{52}$\end{turn}} (a);}

\end{tikzpicture}

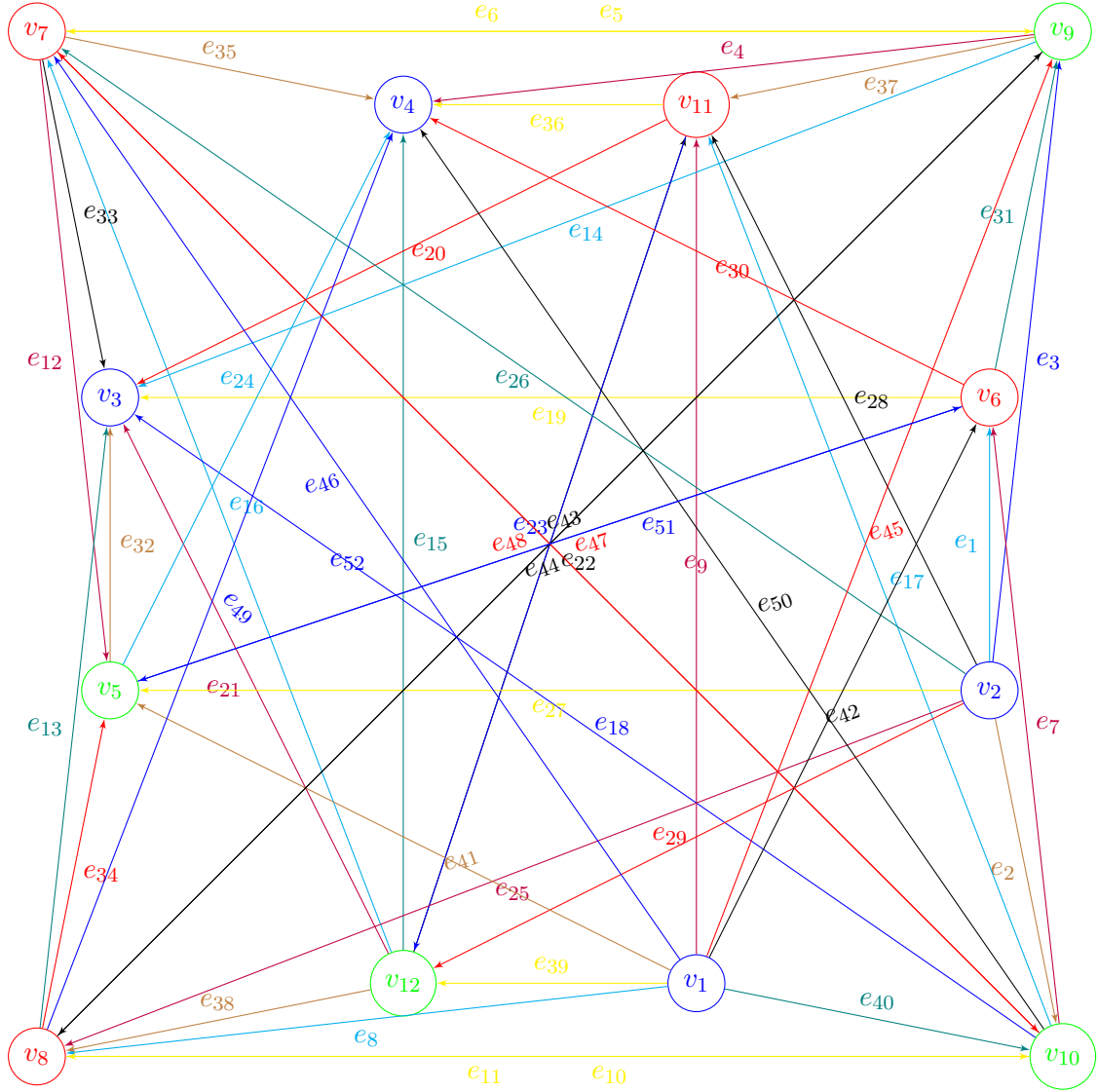
\captionof{figure}{\textbf{Seven connected directed graph $\mathsf{Sq}_{4}^{d}$}}\label{Fig::4}

\vspace*{0.3cm}

\begin{remark}
    \begin{enumerate}[label=\roman*)]
        \item For $n\in\{3,4,\cdots\}$, there are $n-2$ sink and source vertices in $\mathsf{Sq}_{n}^{d}$.
        \item For $n\in\{3,4,\cdots\}$, the number of edges and vertices in $\mathsf{Sq}_{n}^{d}$ is equal to $26(n-2)$, and $4(n-1)$, respectively.
        \item Note that $\mathsf{Sq}_{n}^{d}$ has the vertex chromatic number $\chi_v(\mathsf{Sq}_{n}^{d})=3$, and the edge chromatic number $\chi_e(\mathsf{Sq}_{n}^{d})=2n$, for $n\in\{3,4,\cdots\}$.
    \end{enumerate}
\end{remark}
In the following lemma, we will refer to the sets $\mathscr{V}_{\mathsf{so}}:=\{w,v_2\}$ and $\mathscr{V}_{\mathsf{si}}:=\{v,v_3\}$, the set of source and sink vertices, respectively.
\begin{lemma}\label{Lem::4}
     $\mathsf{Sq}_{4}^{d}$ has 216 Hamiltonian paths.
\end{lemma}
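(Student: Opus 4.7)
The plan is to imitate and extend the exhaustive listing used in Lemma~\ref{Lem::3}, while taking advantage of the layered structure of $\mathsf{Sq}_{4}^{d}$ to reduce the amount of case-work. First I would pin down the source/sink structure: by the preceding remark there are $n-2=2$ sources and $2$ sinks (the blue vertices of empty pre-image and empty image respectively), so every Hamiltonian path is indexed by an ordered pair in $\mathscr{V}_{\mathsf{so}} \times \mathscr{V}_{\mathsf{si}}$, yielding four disjoint families that can be enumerated separately.

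Next, I would exploit the color-preserving involutions of $\mathsf{Sq}_{4}^{d}$ that interchange the inner and outer $2 \times 2$ sublattices while respecting the red, green, and blue color classes, together with the rotational symmetry that permutes the two source--sink pairs. These act freely on the set of Hamiltonian paths, so that counting one representative (source, sink) family and multiplying by the orbit size should recover the full count $216$. Within a single family I would organize the search by the cyclic sequence of color classes visited: the rules of Definition~\ref{Def::2:} together with the orientation rules force strong constraints on inter-layer transitions, so that between consecutive red or green visits one must pass through blue, and the allowed red--green and inter-layer arrows constrain the color pattern in predictable ways. After fixing such a color pattern, the number of compatible Hamiltonian paths should reduce to a product of permutations of the red, green, and inner-blue vertex labels, subject to the $j'' \neq j \neq j'$ distinctness constraint.

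The main obstacle will be the combinatorial explosion relative to $\mathsf{Sq}_{3}^{d}$: with $12$ vertices and $52$ directed edges, a naive depth-first search produces an unwieldy list. The working hope is that the factorization $216 = 4 \cdot 54$ reflects the four source--sink choices times the number of distinct color-patterned skeletons, so that a tractable case analysis recovers the number rather than a brute enumeration. If the symmetry and color-pattern argument proves too intricate to execute by hand, I would fall back to a fully explicit listing in the style of Lemma~\ref{Lem::3}, grouped by source--sink pair and then by the first vertex visited after the source, which converts the problem into a finite check verifiable directly from the adjacency data of $\mathsf{Sq}_{4}^{d}$.
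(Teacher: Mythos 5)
Your organizational skeleton is exactly the one the paper uses: the paper's proof enumerates $54$ vertex sequences running from a source to the vertex just before a sink, pairs each with both sources $w$ and $v_2$, and declares each listed entry to stand for two paths (one for each element of $\mathscr{V}_{\mathsf{si}}$), giving $2\cdot 2\cdot 54=216$. So your factorization $216=4\cdot 54$ and your grouping by source--sink pair are in agreement with the paper. The problem is that everything after that point in your proposal is deferred. The symmetry reduction, even if valid, only reduces the task from counting $216$ paths to counting $54$ skeletons; that residual count \emph{is} the entire content of the lemma, and you never produce it --- you explicitly describe it as a ``working hope'' and reverse-engineer the number $54$ from the target $216$ rather than deriving it. Moreover, the color-preserving involutions and the rotational symmetry permuting the source--sink pairs are asserted, not exhibited: you would need to check them against the actual edge set of Figure 6 (which is not obviously symmetric --- the adjacency data is quite irregular), and you would need to show the action on Hamiltonian paths is free \emph{and} that every skeleton is compatible with every source and every sink, which is the stronger fact the paper implicitly relies on when it omits the terminal $\to\mathscr{V}_{\mathsf{si}}$ uniformly.

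The fallback you name --- a fully explicit listing grouped by source--sink pair and first vertex --- is precisely what the paper does, but announcing that an exhaustive depth-first search could be performed is not a proof that it yields $216$; as written, your proposal contains neither the list nor any argument that a candidate list is exhaustive and duplicate-free. To close the gap you must either (i) carry out the enumeration of the $54$ skeletons and argue completeness, or (ii) rigorously establish the symmetries and then still enumerate one orbit representative family in full.
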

\begin{proof}
 It is not too difficult to observe that the Hamiltonian paths of $\mathsf{Sq}_{4}^{d}$ are as follows. But, as in $\mathsf{Sq}_{4}^{d}$ there are two sink and source vertices (two of each of them); hence, in order to seize the complexity, we divide them into two groups, starting from $w$ (there are 108 Hamiltonian paths in this group) and $v_2$ (there are 108 Hamiltonian paths in this group). Note that in the Hamiltonian paths presented below, they all end with $\to \mathscr{V}_{\mathsf{si}}$, but due to the space limitation, we just omit it. Hence, each Hamiltonian path in the following table presents two Hamiltonian paths.
\begin{align*}
    & v_2\to o\to h_1\to h_4\to l\to n\to m\to u\to k, \quad  w\to o\to h_1\to h_4\to l\to n\to m\to u\to k,\\& v_2\to h_1\to h_4\to l\to n\to m\to o\to k\to u, \quad w\to h_1\to h_4\to l\to n\to m\to o\to k\to u,\\& v_2\to h_1\to h_4\to l\to o\to k\to n\to m\to u, \quad w\to h_1\to h_4\to l\to o\to k\to n\to m\to u,\\& v_2\to l\to o\to m\to n\to h_1\to h_4\to u\to k,  \quad w\to l\to o\to m\to n\to h_1\to h_4\to u\to k,\\& v_2\to m\to n\to l\to o\to h_1\to h_4\to u\to k, \quad w\to m\to n\to l\to o\to h_1\to h_4\to u\to k,\\& v_2\to m\to o\to l\to n\to h_1\to h_4\to u \to k, \quad w\to m\to o\to l\to n\to h_1\to h_4\to u \to k.
\end{align*}

\begin{align*}
&v_2\to u\to k\to n\to m\to o\to h_1\to h_4\to l,\quad w\to u\to k\to n\to m\to o\to h_1\to h_4\to l \\&v_2\to l\to u\to k\to n\to m\to o\to h_1\to h_4,\quad w\to l\to u\to k\to n\to m\to o\to h_1\to h_4,\\&v_2\to h_4\to l\to u\to k\to n\to m\to o\to h_1,\quad w\to h_4\to l\to u\to k\to n\to m\to o\to h_1,\\&v_2\to h_1\to h_4\to l\to u\to k\to n\to m\to o,\quad w\to h_1\to h_4\to l\to u\to k\to n\to m\to o, \\& v_2\to o\to h_1\to h_4\to l\to u\to k\to n\to m,\quad w\to o\to h_1\to h_4\to l\to u\to k\to n\to m,\\&v_2\to m\to o\to h_1\to h_4\to l\to u\to k\to n, \quad w\to m\to o\to h_1\to h_4\to l\to u\to k\to n,\\& v_2\to n\to m\to o\to h_1\to h_4\to l\to u\to k, \quad  w\to n\to m\to o\to h_1\to h_4\to l\to u\to k,\\& v_2\to k\to n\to m \to o\to h_1\to h_4\to l\to u, \quad w\to k\to n\to m \to o\to h_1\to h_4\to l\to u,\\& \quad \\&
\end{align*}

\begin{align*}
&v_2\to l\to u\to k\to n\to h_1\to h_4\to m\to o,\quad w\to l\to u\to k\to n\to h_1\to h_4\to m\to o,\\&v_2\to o\to l\to u\to k\to n\to h_1\to h_4\to m,\quad w\to o\to l\to u\to k\to n\to h_1\to h_4\to m,\\&v_2\to m\to o\to l\to u\to k\to n\to h_1\to h_4,\quad w\to m\to o\to l\to u\to k\to n\to h_1\to h_4,\\&v_2\to h_4\to m\to o\to l\to u\to k\to n\to h_1,\quad w\to h_4\to m\to o\to l\to u\to k\to n\to h_1,\\&v_2\to h_1\to h_4\to m\to o\to l\to u\to k\to n,\quad w\to h_1\to h_4\to m\to o\to l\to u\to k\to n, \\&v_2\to n\to h_1\to h_4\to m\to o\to l\to u\to k,\quad w\to n\to h_1\to h_4\to m\to o\to l\to u\to k,\\&v_2\to k\to n\to h_1\to h_4\to m\to o\to l\to u,\quad w\to k\to n\to h_1\to h_4\to m\to o\to l\to u, \\&v_2\to u\to k\to n\to h_1\to h_4\to m\to o\to l,\quad w\to u\to k\to n\to h_1\to h_4\to m\to o\to l
\end{align*}

\begin{align*}
&v_2\to k\to n\to h_1\to h_4\to l\to o\to m\to u,\quad w\to k\to n\to h_1\to h_4\to l\to o\to m\to u,\\&v_2\to u\to k\to n\to h_1\to h_4\to l\to o\to m,\quad w\to u\to k\to n\to h_1\to h_4\to l\to o\to m,\\&v_2\to m\to u\to k\to n\to h_1\to h_4\to l\to o,\quad w\to m\to u\to k\to n\to h_1\to h_4\to l\to o ,\\&v_2\to o\to m\to u\to k\to n\to h_1\to h_4\to l,\quad w\to o\to m\to u\to k\to n\to h_1\to h_4\to l,\\&v_2\to l\to o\to m\to u\to k\to n\to h_1\to h_4,\quad w\to l\to o\to m\to u\to k\to n\to h_1\to h_4,\\&v_2\to h_4\to l\to o\to m\to u\to k\to n\to h_1,\quad w\to h_4\to m\to u\to k\to n\to l\to o\to h_1,\\&v_2\to h_1\to h_4\to l\to o\to m\to u\to k\to n,\quad w\to h_1\to h_4\to l\to o\to m\to u\to k\to n,\\&v_2\to n\to h_1\to h_4\to l\to o\to m\to u\to k,\quad w\to n\to h_1\to h_4\to l\to o\to m\to u\to k,
\end{align*}

\begin{align*}
&v_2\to k\to o\to m \to n\to h_1\to h_4\to l\to u,\quad w\to k\to o\to m \to n\to h_1\to h_4\to l\to u,\\&v_2\to u\to k\to o\to m \to n\to h_1\to h_4\to l,\quad w\to u\to k\to o\to m \to n\to h_1\to h_4\to l,\\&v_2\to l\to u\to k\to o\to m \to n\to h_1\to h_4,\quad w\to l\to u\to k\to o\to m \to n\to h_1\to h_4,\\&v_2\to h_4\to l\to u\to k\to o\to m \to n\to h_1,\quad w\to h_4\to l\to u\to k\to o\to m \to n\to h_1,\\&v_2\to h_1\to h_4\to l\to u\to k\to o\to m \to n,\quad w\to h_1\to h_4\to l\to u\to k\to o\to m \to n,\\&v_2\to n\to h_1\to h_4\to l\to u\to k\to o\to m,\quad w\to n\to h_1\to h_4\to l\to u\to k\to o\to m,\\&v_2\to m\to n\to h_1\to h_4\to l\to u\to k\to o,\quad w\to m\to n\to h_1\to h_4\to l\to u\to k\to o,\\&v_2\to o\to m\to n\to h_1\to h_4\to l\to u\to k,\quad w\to o\to m\to n\to h_1\to h_4\to l\to u\to k,
\end{align*}

\begin{align*}
&v_2\to l\to o\to h_1\to h_4\to m\to u\to k\to n,\quad w\to l\to o\to h_1\to h_4\to m\to u\to k\to n,\\&v_2\to n\to l\to o\to h_1\to h_4\to m\to u\to k,\quad w\to n\to l\to o\to h_1\to h_4\to m\to u\to k,\\&v_2\to k\to n\to l\to o\to h_1\to h_4\to m\to u,\quad w\to k\to n\to l\to o\to h_1\to h_4\to m\to u,\\&v_2\to u\to k\to n\to l\to o\to h_1\to h_4\to m,\quad w\to u\to k\to n\to l\to o\to h_1\to h_4\to m,\\&v_2\to m\to u\to k\to n\to l\to o\to h_1\to h_4,\quad w\to m\to u\to k\to n\to l\to o\to h_1\to h_4,\\&v_2\to h_4\to m\to u\to k\to n\to l\to o\to h_1,\quad w\to h_4\to m\to u\to k\to n\to l\to o\to h_1,\\& v_2\to h_1\to h_4\to m\to u\to k\to n\to l\to o,\quad w\to h_1\to h_4\to m\to u\to k\to n\to l\to o,\\& v_2\to o\to h_1\to h_4\to m\to u\to k\to n\to l,\quad w\to o\to h_1\to h_4\to m\to u\to k\to n\to l,
\end{align*}

\begin{align*}
& v_2\to o\to l\to n\to h_1\to h_4\to m\to u\to k,\quad w\to o\to l\to n\to h_1\to h_4\to m\to u\to k,\\&v_2\to m\to n\to h_1\to h_4\to l\to o\to k\to u,\quad w\to m\to n\to h_1\to h_4\to l\to o\to k\to u,\\&v_2\to o\to h_1\to h_4\to m\to n\to l\to u\to k,\quad w\to o\to h_1\to h_4\to m\to n\to l\to u\to k,\\&v_2\to m\to o\to k\to n\to h_1\to h_4\to l\to u,\quad w\to m\to o\to k\to n\to h_1\to h_4\to l\to u,\\&v_2\to l\to o\to k\to n\to h_1\to h_4\to m\to u,\quad w\to l\to o\to k\to n\to h_1\to h_4\to m\to u,\\&v_2\to l\to n\to h_1\to h_4\to m\to o\to k\to u,\quad w\to l\to n\to h_1\to h_4\to m\to o\to k\to u,\\&v_2\to h_1\to h_4\to m\to n\to l\to o\to k\to u,\quad w\to h_1\to h_4\to m\to n\to l\to o\to k\to u,\\&v_2\to h_4\to l\to o\to m\to u\to k\to n\to h_1,\quad w\to h_4\to l\to o\to m\to u\to k\to n\to h_1.
\end{align*}







\end{proof}

\begin{cl}
    The number of Hamiltonian paths of $\mathsf{Sq}_{n}^{d}$ for $n\in\{3,4,\cdots\}$ is as follows
    \begin{equation}
        \#\mathscr{H}_{\mathsf{Sq}_{n}^{d}}=7(n+1)+188(n-3).
    \end{equation}
\end{cl}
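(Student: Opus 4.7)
The plan is to establish the formula by induction on $n$, with the base cases $n=3$ and $n=4$ furnished directly by Lemmas \ref{Lem::3} and \ref{Lem::4}. For the inductive step, I would analyse how the Hamiltonian paths of $\mathsf{Sq}_{n+1}^{d}$ relate to those of $\mathsf{Sq}_{n}^{d}$, with the goal of showing that the increment $\#\mathscr{H}_{\mathsf{Sq}_{n+1}^{d}} - \#\mathscr{H}_{\mathsf{Sq}_{n}^{d}}$ stabilises to a single constant (namely $188$) once $n\ge 4$, which together with the base cases pins down the closed-form expression $7(n+1)+188(n-3)$ uniquely.

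First, I would partition $\mathscr{H}_{\mathsf{Sq}_{n}^{d}}$ according to the source--sink pair $(s,t)\in\mathscr{V}_{\mathsf{so}}\times\mathscr{V}_{\mathsf{si}}$ at which the Hamiltonian path begins and terminates. The colour-exchange symmetry of the graphs in $\mathsf{G}_{s}$, together with the interchangeability of the blue source and sink vertices under the orientation rules of Definition \ref{Def::2:}, reduces the enumeration to a single distinguished pair $(s_{0},t_{0})$ multiplied by an explicit combinatorial factor that depends only on $n-2$. Next, for this fixed pair I would exploit the rigid zig-zag forced by the orientation rules $r_{\mathscr{L}_i}\leftrightarrow g_{\mathscr{L}_i}$, $r_{\mathscr{L}_o}\leftrightarrow g_{\mathscr{L}_o}$ together with the cross-layer arrows $I_{g_j}\to O_{r_{j'}}$ and $I_{r_j}\to O_{g_{j'}}$ for $j\neq j'$. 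These constraints imply that every Hamiltonian path is assembled out of a bounded list of local zig-zag templates, which can be read directly from the explicit enumerations in Lemmas \ref{Lem::3} and \ref{Lem::4}; the symmetry noted in the remark immediately after Lemma \ref{Lem::3} can then be used to collapse symmetric templates into equivalence classes.

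The crux of the proof is to verify the linearity of the increment. For this, I would build an explicit insertion/deletion bijection: given a Hamiltonian path $\gamma$ of $\mathsf{Sq}_{n+1}^{d}$, one removes the maximal contiguous segment that visits only the new source, the new sink, and the freshly added red, green and blue vertices of the enlarged inner layer, and checks that what remains is a Hamiltonian path of $\mathsf{Sq}_{n}^{d}$. Conversely, each Hamiltonian path of $\mathsf{Sq}_{n}^{d}$ admits a fixed finite number of insertions of the new segment, which one reads off from the rigid zig-zag templates obtained above. Counting the templates on both sides yields an increment independent of $n$, and direct comparison with $\#\mathscr{H}_{\mathsf{Sq}_{4}^{d}} - \#\mathscr{H}_{\mathsf{Sq}_{3}^{d}}$ identifies this constant as $188$.

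The main obstacle will be exactly this bijection, or rather ruling out Hamiltonian paths of $\mathsf{Sq}_{n+1}^{d}$ that interact non-trivially with both the old and the new portion of the graph simultaneously and therefore fail to be captured by any such local insertion. I would handle this by a case analysis on the first and last times a Hamiltonian path crosses between $\mathscr{L}_{i}$ and $\mathscr{L}_{o}$, using the fact that the blue non-source/non-sink vertices in $\mathsf{Sq}_{n}^{d}$ have very restricted in- and out-neighbourhoods, so only finitely many crossing profiles are possible. Once linearity is established, matching against the two base values from Lemmas \ref{Lem::3} and \ref{Lem::4} fixes the constants $7$ and $188$ in the stated formula, completing the induction.
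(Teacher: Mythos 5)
The paper itself offers no proof of this claim: it is stated immediately after Lemma \ref{Lem::4} with no argument attached, so there is no proof of record to compare your plan against and your proposal must stand on its own. As a plan it is sensible in outline, but it remains only a plan: the decisive step --- the insertion/deletion bijection showing that the increment $\#\mathscr{H}_{\mathsf{Sq}_{n+1}^{d}}-\#\mathscr{H}_{\mathsf{Sq}_{n}^{d}}$ stabilises to a constant --- is described rather than carried out, and you yourself identify the obstruction (Hamiltonian paths that interleave the newly added vertices with the old layers non-locally) without resolving it. Until that bijection is actually constructed and those crossing profiles are excluded, nothing is proved.

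More seriously, your final step cannot succeed as stated, because the closed form in the claim is inconsistent with the two base cases you propose to anchor it to. Lemma \ref{Lem::3} gives $\#\mathscr{H}_{\mathsf{Sq}_{3}^{d}}=28$ and Lemma \ref{Lem::4} gives $\#\mathscr{H}_{\mathsf{Sq}_{4}^{d}}=216$, so the increment your argument would extract is $216-28=188$ and the unique linear interpolation of the two base values is $28+188(n-3)=188n-536$. The stated formula $7(n+1)+188(n-3)=195n-557$ agrees with this only at $n=3$; at $n=4$ it evaluates to $7\cdot 5+188=223\neq 216$, and its per-step increment is $195$, not $188$. Hence ``matching against the two base values'' does not fix the constants $7$ and $188$ of the stated expression --- it contradicts them. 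Either the count in Lemma \ref{Lem::4} or the claimed formula must be corrected before any induction can close, and your argument, if completed, would establish $28+188(n-3)$ rather than the statement in question.
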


\section{Cuntz-Krieger colored graph $C^*$-algebras}
Before starting this section, let us try to reprove a previously claimed result in \cite[Claim 2.7]{R242} which is necessary to proceed with the results provided in this paper, and note that the original proof of this result could be find in \cite{R243}. For more information concerning $\mathbb K[M_q(n)]$ and the associated directed graphs $\mathcal{G}(\Pi_n)$, we refer the interested reader to \cite{RH24}.

\begin{thm}\label{Cl:Op:1}
   For $\mathcal{G}(\Pi_n)=(\mathcal{G}^0,\mathcal{G}^1)$ the associated directed locally finite graphs with $\mathbb K[M_q(n)]$, and $\Pi_n$ the associated adjacency matrices, and $\mathcal{H}:=\ell^2(\mathbb N)$ the underlying infinite dimensional Hilbert space. The claim is that the set 

    $$S=\{ S_i:=\sum_{j=1}^{\infty}\prescript{i}{}{E}_{\mathcal{E}j-A,(n^2-1)j-D} \mid \ \text{for fixed} \ 1\leq i\leq \frac{(n^3+n^2)(n-1)}{2}\}\label{Equ:PIs},$$

is a Cuntz-Krieger $\mathcal{G}(\Pi_n)$-family for $D\in\{0,\cdots,n^2-2\}$, and $\mathcal{E}$ depends on the degree of the exit edges to the vertex $e_{hk}$, where $i$ is considered as an exit edge, i.e. if $\overset{\rightarrow}{\degg}_{hk}=2$, then we will have $\mathcal{E}=2(n^2-1)$, and if it is 3, then we will have $\mathcal{E}=3(n^2-1)$, and so on, and $A\in\{0,\cdots,\overset{\rightarrow}{\degg}_{hk}\times(n^2-1)\}$, and gives us a graph $C^*$-algebra structure $\mathcal{C}^*(\mathcal{G}(\Pi_n))$.
\end{thm}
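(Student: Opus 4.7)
The plan is to verify directly that the operators $S_i$ are partial isometries on $\mathcal H=\ell^2(\mathbb N)$ and that together with naturally chosen vertex projections $\{P_v\}_{v\in\mathcal G^0}$ they satisfy both conditions of Definition \ref{Defn::1...}. The approach is essentially computational: since every $S_i$ is defined as an explicit infinite sum of matrix units on $\mathcal H$, all verifications reduce to the matrix-unit calculus $E_{a,b}^{*}=E_{b,a}$ and $E_{a,b}E_{c,d}=\delta_{b,c}E_{a,d}$, combined with careful bookkeeping of the arithmetic progressions indexed by $(\mathcal E, A, D)$.

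First I would compute the two ``moment'' products. Because the indices $\mathcal E j - A$ (respectively $(n^2-1)j-D$) are pairwise distinct as $j$ varies over $\mathbb N$, the cross terms in $S_i^{*}S_i$ and $S_iS_i^{*}$ vanish by the Kronecker condition, yielding
\[
S_i^{*}S_i \;=\; \sum_{j=1}^{\infty} E_{(n^2-1)j-D,\,(n^2-1)j-D},
\qquad
S_iS_i^{*}\;=\;\sum_{j=1}^{\infty} E_{\mathcal E j-A,\,\mathcal E j-A}.
\]
In particular each $S_i$ is a partial isometry whose initial and final projections are diagonal and are supported on arithmetic progressions in $\mathbb N$. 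I would then \emph{define} $P_{r(e_i)}:=S_i^{*}S_i$, which forces the first Cuntz-Krieger relation; a small consistency check, using the fact that $D$ only records the range vertex and not the particular incoming edge, verifies that this definition is well-posed across multiple edges pointing into the same vertex.

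Second, I would check mutual orthogonality and the ``summation'' relation. Two diagonal projections are orthogonal if and only if their supporting progressions in $\mathbb N$ are disjoint; letting $D$ range over the complete residue system $\{0,\dots,n^2-2\}$ modulo $n^2-1$ automatically produces pairwise disjoint supports, so $\{P_v\}_{v\in\mathcal G^0}$ is a mutually orthogonal family. For a non-sink vertex $v=e_{hk}$ with exit degree $d:=\overset{\rightarrow}{\degg}_{hk}$ one has $\mathcal E=d(n^2-1)$ by definition, and letting $A$ range over $\{0,\dots,d(n^2-1)\}$ partitions the progression supporting $P_v$ into exactly $d$ disjoint sub-progressions; summing the corresponding diagonal projections $S_iS_i^{*}$ over the $d$ edges emanating from $v$ recovers $P_v$. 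Local finiteness of $\mathcal G(\Pi_n)$ guarantees this sum is finite in $B(\mathcal H)$, so item (2) of Definition \ref{Defn::1...} holds.

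Finally, to identify the resulting $C^{*}$-algebra with $\mathcal C^{*}(\mathcal G(\Pi_n))$ in the sense of Definition \ref{Def:GCA}, I would invoke the universal property to obtain a surjection from $\mathcal C^{*}(\mathcal G(\Pi_n))$ onto the concrete algebra generated by the $\{P_v, S_i\}$, and argue it is faithful via the Cuntz-Krieger uniqueness theorem after checking that $\mathcal G(\Pi_n)$ satisfies condition (L), which is transparent from its explicit description in terms of the vertices $x_{ij}$. The main obstacle, as I see it, is \emph{not} the $C^{*}$-algebraic content---which becomes routine once the moment products have been computed---but the combinatorial bookkeeping: one must check that the prescribed parameter ranges for $(\mathcal E, A, D)$ actually match the vertex-edge incidence of $\mathcal G(\Pi_n)$ so that the disjointness of supports and the covering identity needed for relation (2) come out exactly right for every exit degree appearing in $\mathcal G(\Pi_n)$.
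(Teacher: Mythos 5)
Your proposal is correct in outline but follows a genuinely different route from the paper. The paper argues by induction on $n$: the case $n=2$ is imported from an earlier work, the case $n=3$ is handled by explicitly listing the partial isometries attached to the six Hamiltonian paths of $\mathcal{G}(\Pi_3)$ (plus the edges off those paths) and writing out the identities $P_{e_{ij}}=S_f^{*}S_f=\sum_{s(e)=e_{ij}}S_eS_e^{*}$ vertex by vertex, and the inductive step consists of the single arithmetic observation that the column-index parameter $h_n=n^2-1$ obeys $h_{n+1}=h_n+2n+1$. You instead verify the relations uniformly in $n$ by matrix-unit calculus: the moment products are diagonal projections supported on arithmetic progressions, distinct residues $D$ modulo $n^2-1$ give mutually orthogonal vertex projections, and at a non-sink vertex of exit degree $d$ the progression of common difference $n^2-1$ splits into exactly $d$ sub-progressions of common difference $d(n^2-1)$, which is relation (2). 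This buys two things the paper does not supply: an argument valid for all $n$ at once (the paper's inductive step only tracks the index parameter and never re-verifies the Cuntz--Krieger relations for $n+1$), and an actual identification of the concrete algebra $C^{*}(S,P)$ with the universal algebra $\mathcal{C}^{*}(\mathcal{G}(\Pi_n))$ via a uniqueness theorem, a step the paper omits entirely. What the paper's explicit tables buy is precisely the consistency check you flag as the remaining obstacle: that for every edge one can choose $(A,D)$ with $D$ determined by the range vertex and with $A\equiv D_{s(e)}\pmod{n^2-1}$ running over distinct residues modulo $\mathcal{E}$ as $e$ ranges over the edges leaving a fixed source. Since the row index of an edge is constrained only by its source and the column index only by its range, no conflict arises and such an assignment always exists; if you record that observation explicitly, your argument is complete and strictly stronger than the paper's.
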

\begin{proof}
    In order to prove, we will proceed by induction. But before proceeding, let us clarify the almost approximate values of the parameters $D$. Starting with Hamiltonian paths, we know that they start from the only source vertex $e_{1,1}$ and terminate at the only sink vertex $e_{n,n}$. Hence depending on the first connection, we might have each of $(n^2-1)i-D$, for $D\in\{0,\cdots,n^2-2\}$. For example, if the first connection is $e_{1,1}\rightarrow e_{1,2}$, then we have $D=0$, or if it is $e_{1,1}\rightarrow e_{2,1}$, then we have $D=1$, and \textit{etc}.

    In \cite{R242}, we already have proved that the assertion is true for $n=2$. Bellow, we will prove that it also is true for $n=3$.

    Exactly in a same approach as in \cite[Proposition 3.1]{R242}, we note that there might be finite or infinite dimensional sets of projections. But in order to skip the complication, and to shorten the proof, we skip proving that for some certain vertices $e_{i,j}$, we should have $\dim(P_{e_{i,j}})>\infty$, and based on an already proven fact, all the other projections also should be infinite dimensional. So, let us try to construct an appropriate \textit{CK-}$\mathcal{G}(\Pi_3)$ family satisfying the relations $S_{e}^{*}S_e=P_{r(e)}$, for all edges $e\in\mathcal{G}(\Pi_3)^1$, and $P_{e_{ij}}=\sum\limits_{s(e)=e_{i,j}}S_eS_{e}^{*}$ for the case when $e_{i,j}\in\mathcal{G}(\Pi_3)^0$ is not a sink. 
    
    \hspace*{1cm} \begin{tikzpicture}
\tikzset{vertex/.style = {shape=circle,draw,minimum size=0.2em}}
\tikzset{edge/.style = {->,> = latex'}}
\node[vertex] (a) at  (-1,1) {$e_{21}$};
\node[vertex] (b) at  (4,3) {$e_{22}$};
\node[vertex] (c) at  (9,1) {$e_{23}$};
\node[vertex] (d) at  (4,-3) {$e_{31}$};
\node[vertex] (e) at  (-2,-4) {$e_{11}$};
\node[vertex] (f) at  (10,-4) {$e_{33}$};
\node[vertex] (g) at  (0.7,0.5) {$e_{12}$};
\node[vertex] (h) at  (2.3,0.5) {$e_{13}$};
\node[vertex] (i) at  (7.3,-1.2) {$e_{32}$};
\draw[edge] (a) to node[above] {$j_5$} (b);
\draw[edge] (b) to node[above] {\hspace{-0.9cm}\begin{turn}{-20}$g_6=f_6$\end{turn}} (c);
\draw[edge] (a) to node[below] {\hspace{0.83cm}\begin{turn}{-220}$e_3=f_3$\end{turn}} (d);
\draw[edge] (e) to node[left] {\begin{turn}{-280}$g_1=h_1$\end{turn}} (a);
\draw[edge] (c) to node[right] {\hspace{-0.4cm} \begin{turn}{-80}$i_8=e_8=j_8=h_8$\end{turn}} (f);
\draw[edge] (d) to node[below] {$1$} (f);
\draw[edge] (e) to node[below] {$i_1$} (d);
\draw[edge] (g) to node[above] {\textcolor{red}{$g_3=h_3$}} (h);
\draw[edge] (h) to node[above] {$14$} (f);
\draw[edge] (g) to node[below] {$13$} (i);
\draw[edge] (h) to node[below] {$7$} (c);
\draw[edge] (b) to node[above] {$e_6=h_6=i_6=j_6$} (i);
\draw[edge] (e) to node[above] {\begin{turn}{-296}$e_1=f_1$\end{turn}} (g);
\draw[edge] (e) to node[below] {$j_1$} (h);
\draw[edge] (g) to node[above] {$i_5$} (b);
\draw[edge] (a) to node[below] {$12$} (c);
\draw[edge] (i) to node[above] {$g_8=f_8$} (f);



\draw[edge] (b) to[bend left] node[left] {\textcolor{red}{$10$}} (d);
\draw[edge] (d) to[bend left] node[right] {$g_5=h_5$} (b);
\draw[edge] (h) to[bend left] node[left] {\textcolor{RoyalBlue}{\hspace{-3cm}\begin{turn}{-260}$g_4=h_4=j_2$\end{turn}}} (d);
\draw[edge] (d) to[bend left] node[right] {\hspace{0.5cm}\begin{turn}{-250}$e_4=f_4=i_2$\end{turn}} (h);
\draw[edge] (h) to[bend left] node[below] {$i_3$} (a);
\draw[edge] (a) to[bend left] node[above] {$8$} (h);
\draw[edge] (a) to[bend left] node[above] {\textcolor{RoyalBlue}{$g_2=h_2=i_4$}} (g);
\draw[edge] (g) to[bend left] node[below] {\textcolor{red}{$e_2=f_2=j_4$}} (a);
\draw[edge] (b) to[bend left] node[below] {\textcolor{red}{$9$}} (h);
\draw[edge] (h) to[bend left] node[right] {\textcolor{red}{\hspace{-0.5cm}\begin{turn}{50}$e_5=f_5$\end{turn}}} (b);
\draw[edge] (i) to[bend left] node[above] {\textcolor{Red}{\hspace{-1cm}\begin{turn}{0}$e_7=h_7=i_7=j_7$\end{turn}}} (c);
\draw[edge] (c) to[bend left] node[below] {\textcolor{Red}{\vspace{-1cm}\begin{turn}{-320}$g_7=f_7$\end{turn}}} (i);
\draw[edge] (h) to[bend left] node[below] {\textcolor{RoyalBlue}{$2$}} (i);
\draw[edge] (i) to[bend left] node[above] {\textcolor{RoyalBlue}{$3$}} (h);
\draw[edge] (c) to[bend left] node[below] {\textcolor{RoyalBlue}{$5$}} (d);
\draw[edge] (d) to[bend left] node[below] {\textcolor{RoyalBlue}{$4$}} (c);
\draw[edge] (d) to node[below] {\textcolor{Red}{$11$}} (i);
\draw[edge] (g) to[bend right] node[below] {\textcolor{Red}{$6$}} (d);
\draw[edge] (d) to[bend right] node[above] {\textcolor{Red}{$j_3$}} (g);

\end{tikzpicture}
\vspace*{0.2cm}

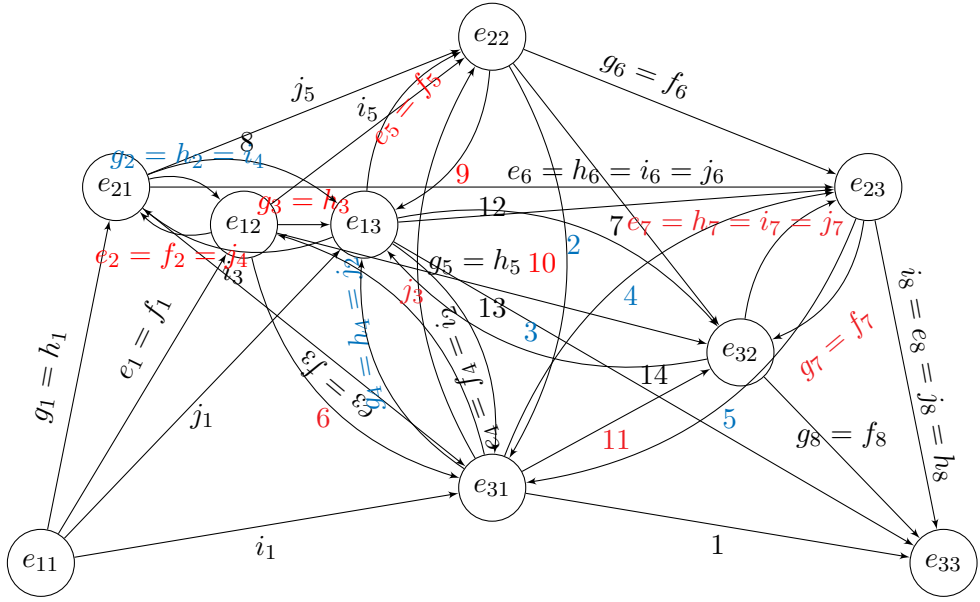
\captionof{figure}{\textbf{Directed locally connected graph $\mathcal{G}(\Pi_3)$}}

\vspace*{0.5cm}
    
 We note that $\mathcal{G}(\Pi_3)$ consists of $6$ Hamiltonian paths listed below:

 \begin{align*}
     &\mathcal{H}_1: e_{11}\overset{e_1}{\longrightarrow}e_{12}\overset{e_2}{\longrightarrow}e_{21}\overset{e_3}{\longrightarrow}e_{31}\overset{e_4}{\longrightarrow}e_{13}\overset{e_5}{\longrightarrow}e_{22}\overset{e_6}{\longrightarrow}e_{32}\overset{e_7}{\longrightarrow}e_{23}\overset{e_8}{\longrightarrow}e_{33},\\&\mathcal{H}_2:e_{11}\overset{f_1}{\longrightarrow}e_{12}\overset{f_2}{\longrightarrow}e_{21}\overset{f_3}{\longrightarrow}e_{31}\overset{f_4}{\longrightarrow}e_{13}\overset{f_5}{\longrightarrow}e_{22}\overset{f_6}{\longrightarrow}e_{23}\overset{f_7}{\longrightarrow}e_{32}\overset{f_8}{\longrightarrow}e_{33},\\&\mathcal{H}_3:e_{11}\overset{g_1}{\longrightarrow}e_{21}\overset{g_2}{\longrightarrow}e_{12}\overset{g_3}{\longrightarrow}e_{13}\overset{g_4}{\longrightarrow}e_{31}\overset{g_5}{\longrightarrow}e_{22}\overset{g_6}{\longrightarrow}e_{23}\overset{g_7}{\longrightarrow}e_{32}\overset{g_8}{\longrightarrow}e_{33},\\&\mathcal{H}_4:e_{11}\overset{h_1}{\longrightarrow}e_{21}\overset{h_2}{\longrightarrow}e_{12}\overset{h_3}{\longrightarrow}e_{13}\overset{h_4}{\longrightarrow}e_{31}\overset{h_5}{\longrightarrow}e_{22}\overset{h_6}{\longrightarrow}e_{32}\overset{h_7}{\longrightarrow}e_{23}\overset{h_8}{\longrightarrow}e_{33},\\&\mathcal{H}_5:e_{11}\overset{i_1}{\longrightarrow}e_{31}\overset{i_2}{\longrightarrow}e_{13}\overset{i_3}{\longrightarrow}e_{21}\overset{i_4}{\longrightarrow}e_{12}\overset{i_5}{\longrightarrow}e_{22}\overset{i_6}{\longrightarrow}e_{32}\overset{i_7}{\longrightarrow}e_{23}\overset{i_8}{\longrightarrow}e_{33},\\&\mathcal{H}_6:e_{11}\overset{j_1}{\longrightarrow}e_{13}\overset{j_2}{\longrightarrow}e_{31}\overset{j_3}{\longrightarrow}e_{12}\overset{j_4}{\longrightarrow}e_{21}\overset{j_5}{\longrightarrow}e_{22}\overset{j_6}{\longrightarrow}e_{32}\overset{j_7}{\longrightarrow}e_{23}\overset{j_8}{\longrightarrow}e_{33}.
 \end{align*}

    Now the claim is that the desired set of partial isometries will be as follows:
    \begin{align*}
        &S_{e_1}=S_{f_1}=\sum_{j=1}^{\infty}E_{32j-31,8j}, S_{g_1}=S_{h_1}=\sum_{j=1}^{\infty}E_{32j-23,8j-1}, S_{i_1}=\sum_{j=1}^{\infty}E_{32j-7,8j-2},\\& S_{j_1}=\sum_{j=1}^{\infty}E_{32j-15,8j-3},\\&S_{g_2}=S_{h_2}=S_{i_4}=\sum_{j=1}^{\infty}E_{40j-25,8j}, S_{e_2}=S_{f_2}=S_{j_4}=\sum_{j=1}^{\infty}E_{40j-32,8j-1}, \\& S_{j_2}=\sum_{j=1}^{\infty}E_{48j-35,8j-2}, S_{i_2}=\sum_{j=1}^{\infty}E_{48j-42,8j-3}, \\&S_{j_3}=\sum_{j=1}^{\infty}E_{48j-10,8j}, S_{i_3}=\sum_{j=1}^{\infty}E_{48j-27,8j-1}, S_{e_3}=S_{f_3}=\sum_{j=1}^{\infty}E_{40j-33,8j-2}, 
    \end{align*}

    \begin{align*}
    &S_{g_3}=S_{h_3}=\sum_{j=1}^{\infty}E_{40j-24,8j-3}, \\&S_{i_4}=\sum_{j=1}^{\infty}E_{40j-25,8j}, S_{j_4}=\sum_{j=1}^{\infty}E_{40j-32,8j-1}, S_{g_4}=S_{h_4}=S_{j_2}=\sum_{j=1}^{\infty}E_{48j-35,8j-2}, \\&S_{e_4}=S_{f_4}=S_{i_2}=\sum_{j=1}^{\infty}E_{48j-42,8j-3},\\&S_{e_5}=S_{f_5}=\sum_{j=1}^{\infty}E_{48j-43,8j-4}, S_{g_5}=S_{h_5}=\sum_{j=1}^{\infty}E_{48j-34,8j-4}, S_{i_5}=\sum_{j=1}^{\infty}E_{40j-8,8j-4}, \\&S_{j_5}=\sum_{j=1}^{\infty}E_{40j-1,8j-4},\\&S_{e_6}=S_{h_6}=S_{i_6}=S_{j_6}=\sum_{j=1}^{\infty}E_{32j-28,8j-5}, S_{g_6}=S_{f_6}=\sum_{j=1}^{\infty}E_{24j-14,8j-5}, \\& S_{i_6}=S_{j_6}=\sum_{j=1}^{\infty}E_{32j-28,8j-5},
        \\&\hspace{0cm} S_{e_7}=S_{h_7}=S_{i_7}=S_{j_7}=\sum_{j=1}^{\infty}E_{24j-21,8j-6}, S_{g_7}=S_{f_7}=\sum_{j=1}^{\infty}E_{32j-20,8j-6},\\& \hspace{0cm} S_{i_7}=\sum_{j=1}^{\infty}E_{24j-21,8j-6}, S_{j_7}=\sum_{j=1}^{\infty}E_{24j-21,8j-6},\\&\hspace{0cm} S_{e_8}=S_{h_8}=S_{i_8}=S_{j_8}=\sum_{j=1}^{\infty}E_{24j-22,8j-7}, S_{g_8}=S_{f_8}=\sum_{j=1}^{\infty}E_{24j-13,8j-7}, \\&\hspace{0cm} S_{i_8}=\sum_{j=1}^{\infty}E_{24j-22,8j-7}, S_{j_8}=\sum_{j=1}^{\infty}E_{24j-22,8j-7},
    \end{align*}
    and the rest of the isometries are apart from the set of Hamiltonian paths, and could be outlined as follows:
    \begin{align*}
&S_{1}=\sum_{j=1}^{\infty}E_{48j-2,8j-7}, S_{14}=\sum_{j=1}^{\infty}E_{48j-11,8j-7}, S_{4}=\sum_{j=1}^{\infty}E_{48j-29,8j-7},\\& S_{7}=\sum_{j=1}^{\infty}E_{48j-19,8j-6}, S_{12}=\sum_{j=1}^{\infty}E_{40j-9,8j-6},\\& S_{2}=\sum_{j=1}^{\infty}E_{48j-3,8j-5}, S_{11}=\sum_{j=1}^{\infty}E_{48j-26,8j-5}, S_{13}=\sum_{j=1}^{\infty}E_{40j-11,8j-5}, 
    \end{align*}
    \begin{align*}
        & S_{3}=\sum_{j=1}^{\infty}E_{24j-5,8j-3}, S_{8}=\sum_{j=1}^{\infty}E_{40j-17,8j-3}, S_{9}=\sum_{j=1}^{\infty}E_{32j-12,8j-3}, \\& S_{5}=\sum_{j=1}^{\infty}E_{24j-6,8j-2}, S_{6}=\sum_{j=1}^{\infty}E_{40j-16,8j-2}, S_{5}=\sum_{j=1}^{\infty}E_{32j-4,8j-2}.
    \end{align*}
    Now the goal is to prove that the above isometries satisfy in the relations of Cuntz-Krieger $\mathcal{G}(\Pi_3)$-family, i.e. $S_{e}^{*}S_e=P_{r(e)}$, for all edges $e\in\mathcal{G}(\Pi_3)^1$, and $P_{e_{ij}}=\sum\limits_{s(e)=e_{i,j}}S_eS_{e}^{*}$ for the case when $e_{i,j}\in\mathcal{G}(\Pi_3)^0$ is not a sink, and this could be verified simply. Here we just put it over as follows.

    \begin{align*}
        &P_{e_{13}}=S_{3}^{*}S_3=S_{g_3}^{*}S_{g_3}=S_{j_1}^{*}S_{j_1}=S_{e_4}^{*}S_{e_4}=S_{9}^{*}S_{9}=S_{8}^{*}S_{8}=S_{e_5}S_{e_5}^{*}+S_{g_4}S_{g_4}^{*}+S_{i_3}S_{i_3}^{*}\\& \hspace{0.7cm}+S_{7}S_{7}^{*}+S_{14}S_{14}^{*}+S_{2}S_{2}^{*},\\&P_{e_{22}}=S_{e_5}^{*}S_{e_5}=S_{g_5}^{*}S_{g_5}=S_{j_5}^{*}S_{j_5}=S_{i_5}^{*}S_{i_5}=S_{e_6}S_{e_6}^{*}+S_{g_6}S_{g_6}^{*}+S_{9}S_{9}^{*}+S_{10}S_{10}^{*},\\&P_{e_{23}}=S_{e_7}^{*}S_{e_7}=S_{12}^{*}S_{12}=S_{g_6}^{*}S_{g_6}=S_{7}^{*}S_{7}=S_{4}^{*}S_{4}=S_{e_8}S_{e_8}^{*}+S_{g_7}S_{g_7}^{*}+S_{5}S_{5}^{*}, \\&P_{e_{32}}=S_{2}^{*}S_{2}=S_{g_7}^{*}S_{g_7}=S_{e_6}^{*}S_{e_6}=S_{13}^{*}S_{13}=S_{11}^{*}S_{11}=S_{e_7}S_{e_7}^{*}+S_{g_8}S_{g_8}^{*}+S_{3}S_{3}^{*},
        \end{align*}
        
        \begin{align*}
   &P_{e_{33}}=S_{1}^{*}S_{1}=S_{14}^{*}S_{14}=S_{e_8}^{*}S_{e_8}=S_{g_8}^{*}S_{g_8},\\&P_{e_{31}}=S_{i_1}^{*}S_{i_1}=S_{e_3}^{*}S_{e_3}=S_{10}^{*}S_{10}=S_{g_4}^{*}S_{g_4}=S_{5}^{*}S_{5}=S_{6}^{*}S_{6}=S_{e_4}S_{e_4}^{*}+S_{g_5}S_{g_5}^{*}+S_{11}S_{11}^{*}\\&\hspace{0.7cm}+S_{4}S_{4}^{*}+S_{j_3}S_{j_3}^{*}+S_{1}S_{1}^{*},\\&P_{e_{11}}=S_{e_1}^{*}S_{e_1}+S_{g_1}S_{g_1}^{*}+S_{i_1}S_{i_1}^{*}+S_{j_1}S_{j_1}^{*},\\&P_{e_{21}}=S_{g_1}^{*}S_{g_1}=S_{e_2}^{*}S_{e_2}=S_{i_3}^{*}S_{i_3}=S_{e_3}S_{e_3}^{*}+S_{g_2}S_{g_2}^{*}+S_{8}S_{8}^{*}+S_{12}S_{12}^{*}+S_{j_5}S_{j_5}^{*},\\&P_{e_{12}}=S_{g_2}^{*}S_{g_2}=S_{e_1}^{*}S_{e_1}=S_{j_3}^{*}S_{j_3}=S_{e_2}S_{e_2}^{*}+S_{g_3}S_{g_3}^{*}+S_{6}S_{6}^{*}+S_{i_5}S_{i_5}^{*}+S_{13}S_{13}^{*},
    \end{align*}
and at this point, we are not going to have a detailed proof of the above relations, since it is easy to see them. This provide us with the desired Cuntz-Krieger $\mathcal{G}(\Pi_3)$-family, and shows that the $C^*(S,P)$ is an infinite-dimensional $C^*$-algebra.

Now in order to proceed further, let us see what we have. For $n=2$, we have $S_i=\sum_{j=1}^{\infty}\prescript{i}{}{E}_{\mathcal{E}j-A,3j-D}$, and for $n=3$ case we have $S_i=\sum_{j=1}^{\infty}\prescript{i}{}{E}_{\mathcal{E}j-A,8j-D}$, and $S_i=\sum_{j=1}^{\infty}\prescript{i}{}{E}_{\mathcal{E}j-A,15j-D}$ for the $n=4$ case, and so on. So, we get a sequence of numbers $3, 8, 15, 24, 35, \cdots$. In order to have a defining rule for this sequence, we use the recurrence relations $a_{n-2}+2n=a_{n-1}$, and $b_{n-2}+1=b_{n-1}$, for $n\in\{2,3,\cdots\}$, together with conditional relations $a_0=1$ and $b_0=2$. 

Now consider the recurrence relation $h_n+2n+1=h_{n+1}$, for $h_n=b_{n-2}+a_{n-2}$ and the conditional relation $h_2=b_0+a_0=3$.

Hence, we get the following relation
$$S=\{ S_i:=\sum_{j=1}^{\infty}\prescript{i}{}{E}_{\mathcal{E}j-A,h_nj-D} \mid \ \text{for fixed} \ 1\leq i\leq \frac{(n^3+n^2)(n-1)}{2}\}\label{Equ:PIs:::},$$
for $h_{n}$ as above,and $n\in\{2,\cdots\}$. Now we can proceed by induction on $n$. For $n=2,3$, we already have seen that the assertion holds. Suppose that for $\mathcal{G}(\Pi_i)$, $i\in\{2,3,\cdots\}$ (\ref{Equ:PIs:::}) satisfies.

Now the claim is that for $\mathcal{G}(\Pi_{n+1})$ we have
$$S=\{ S_i:=\sum_{j=1}^{\infty}\prescript{i}{}{E}_{\mathcal{E}j-A,h_{n+1}j-D} \mid \ \text{for fixed} \ 1\leq i\leq \frac{(n^3+n^2)(n-1)}{2}\}\label{Equ:PIs:::},$$
which is almost clear, since we have $(n+1)^2-1=n^2-1+2n+1=h_n+2n+1=h_{n+1},$ and we are done with the induction steps!


\end{proof}

Now the plan is to apply Theorem \ref{Cl:Op:1} to the graphs in $\mathsf{G}_{\mathrm{s}}$, to see if it will provide us with appropriate finite/infinite $C^*$-graph algebras or not. The claim might be extended to the case of an arbitrary colored directed graph as follows.

\begin{cl}\label{Cl:Op:1}
  Let $n\in\{3,4,\cdots\}$, and $\Gamma=(\Gamma^0,\Gamma^1)$ be an arbitrary colored directed graph with chromatic vertex number $\chi_v(\Gamma)$ and $|\Gamma^0|=4n$, with $n-2$ equal number of sink and source vertices. And 
let $\mathcal{A}_n$ be the associated adjacency matrix, and  $\mathcal{H}:=\ell^2(\mathbb N)$ be the underlying infinite dimensional Hilbert space. Then the claim is that the set 

    $$S=\{ S_i:=\sum_{j=1}^{\infty}\prescript{i}{}{E}_{\mathcal{E}j-A,(n^2-1)j-D} \mid \ \text{for fixed} \ 1\leq i\leq 26(n-2)\}\label{Equ:PIs},$$

is a Cuntz-Krieger $\mathcal{G}$-family for $D\in\{0,\cdots,n^2-2\}$, and $\mathcal{E}$ depends on the degree of the exit edges of the vertex $e_{hk}$, where $i$ is considered as an exit edge, i.e. if $\overset{\rightarrow}{\degg}_{hk}=2$, then we will have $\mathcal{E}=2(n^2-1)$, and if it is 3, then we will have $\mathcal{E}=\chi_v(\Gamma)(n^2-1)$, and so on, and $A\in\{0,\cdots,\overset{\rightarrow}{\degg}_{hk}\times(n^2-1)\}$. 

$S$ gives us an infinite-dimensional graph $C^*$-algebra structure $\mathcal{C}^*(\Gamma)$.

\end{cl}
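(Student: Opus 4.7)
The plan is to mirror the inductive construction used in the proof of Theorem~\ref{Cl:Op:1} above, treating the present claim as the natural extension from the specific graph family $\mathcal{G}(\Pi_n)$ to the colored directed graphs arising from the $\mathsf{Sq}_n^d$ construction. Since the recurrence $h_{n+1} = h_n + 2n + 1$ with $(n+1)^2 - 1 = n^2 + 2n = h_{n+1}$ has already been established, the algebraic backbone of the argument is in place; what remains is the combinatorial bookkeeping for the new graph family.

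First I would handle the base case $n = 3$, where the graph in question is $\mathsf{Sq}_3^d$ with $26(3-2) = 26$ edges and $|\Gamma^0| = 8$ vertices, admitting $28$ Hamiltonian paths by Lemma~\ref{Lem::3}. Using the enumeration of Hamiltonian paths together with the coloring rules from Definition~\ref{Def::2:}, I would assign to each edge $e$ a partial isometry $S_e = \sum_{j=1}^{\infty} E_{\mathcal{E}j - A,\, 8j - D}$ by specifying the triple $(\mathcal{E}, A, D)$ according to the exit degree $\overset{\rightarrow}{\degg}_{hk}$ of its source $e_{hk}$: namely $\mathcal{E} = \overset{\rightarrow}{\degg}_{hk}(n^2-1)$, with $A$ chosen from $\{0, \ldots, \overset{\rightarrow}{\degg}_{hk}(n^2-1)\}$ to distinguish co-sourced edges, and $D$ chosen from $\{0, \ldots, n^2 - 2\}$ to distinguish co-ranged edges. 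The role of the chromatic number $\chi_v(\Gamma) = 3$ enters precisely at those vertices $v \in V_1$ of maximal degree $2n-2 = 4$, whose exit degree forces $\mathcal{E} = \chi_v(\Gamma)(n^2-1)$ in the generic case.

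Next I would verify the Cuntz-Krieger relations $S_e^* S_e = P_{r(e)}$ and $P_v = \sum_{s(e)=v} S_e S_e^*$ (for non-sink $v$), exactly in the style carried out at the end of the proof of Theorem~\ref{Cl:Op:1}. The verification reduces to two orthogonality checks: edges with a common range produce the same projection $E_{8j-D,\,8j-D}$ onto the appropriate column indices, while edges with a common source produce mutually orthogonal rank-one projections $E_{\mathcal{E}j-A,\,\mathcal{E}j-A}$ whose sum recovers the source vertex projection. Edges shared across multiple Hamiltonian paths are identified as in the previous theorem (e.g.\ $S_{g_2} = S_{h_2} = S_{i_4}$), and this identification is consistent because the second-index parameter $(n^2 - 1)j - D$ depends only on the \emph{target} vertex, not on the path.

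For the induction step I would assume the claim for $n$ and consider $\mathsf{Sq}_{n+1}^d$. By the edge-count formula $26(n-2)$ and the observation that the sink/source count increases by one at each step, the four newly introduced vertices contribute precisely $26$ new edges, each of which can be assigned a partial isometry by the same rule with the second index scaled from $h_n j - D$ to $h_{n+1} j - D = (n+1)^2 - 1)j - D$. Orthogonality across the new and old partial isometries is preserved because the recurrence $h_{n+1} = h_n + 2n + 1$ guarantees a strict gap between consecutive column indices. The main obstacle I expect is the base case: determining, for each of the $26$ edges of $\mathsf{Sq}_3^d$ (with its particular sink/source configuration $\mathscr{V}_{\mathsf{so}}, \mathscr{V}_{\mathsf{si}}$), exactly which Hamiltonian paths pass through it and hence which identifications of partial isometries must be enforced so that the completeness relation $P_v = \sum_{s(e)=v} S_e S_e^*$ holds at every vertex in $V_1 \cup V_2 \cup V_3$; absent this explicit bookkeeping, the infinite-dimensionality conclusion (following from at least one $P_v$ being of infinite rank, which propagates to every other $P_v$ via the defining relations exactly as in \cite{R242}) does not automatically follow.
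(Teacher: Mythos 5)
Your base case is essentially what the paper itself does: the Claim is never proved in general in the paper --- it is stated as a claim, the following Proposition establishes only the $n=3$ instance by writing down the $26$ explicit partial isometries $S_{e_i}=\sum_{j=1}^{\infty}\prescript{i}{}{E}_{\mathcal{E}j-A,8j-D}$ for $\mathsf{Sq}_{3}^{d}$ and checking (or rather asserting) the Cuntz--Krieger relations, and the concluding remarks explicitly say the higher-dimensional computation remains open. So for $n=3$ your outline coincides with the paper's route. The problems are with everything beyond that. First, the statement concerns an \emph{arbitrary} colored directed graph with $|\Gamma^0|=4n$, $26(n-2)$ edges and $n-2$ sources and sinks, whereas your induction is tied to the specific family $\mathsf{Sq}_{n}^{d}$ obtained by adjoining four vertices at each step; a graph with the stated numerics need not arise from that construction, so even a successful induction on $\mathsf{Sq}_{n}^{d}$ would not prove the Claim as stated. (Note also that the paper's own remark gives $|\mathsf{Sq}_{n}^{d}{}^{0}|=4(n-1)$, not $4n$, so the Claim does not even match its intended examples.)

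Second, the induction step does not go through in the form you describe. Passing from $n$ to $n+1$ changes the column modulus from $n^{2}-1$ to $(n+1)^{2}-1$ for \emph{every} edge, not only for the $26$ new ones, so none of the old partial isometries can be retained; likewise every completeness relation $P_v=\sum_{s(e)=v}S_eS_e^*$ at an old vertex that acquires new outgoing edges must be re-derived from scratch. There is therefore no genuine inductive reuse, only a fresh global construction at each $n$, which is exactly the unperformed bookkeeping you flag at the end. Moreover, the recurrence $h_{n+1}=h_n+2n+1=(n+1)^2-1$ that you import from the proof for $\mathcal{G}(\Pi_n)$ encodes the vertex count of \emph{that} family ($n^2$ vertices, hence $n^2-1$ possible ranges); it coincides with the vertex count of $\mathsf{Sq}_{n}^{d}$ only at $n=3$ (where $4(n-1)=8=n^2-1$), so for $n\ge 4$ the claimed ``strict gap between consecutive column indices'' has no relation to the structure of the colored graph and the disjointness of the row-index progressions needed for $P_v=\sum_{s(e)=v}S_eS_e^*$ is unjustified. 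As it stands the proposal establishes at most the $n=3$ case already contained in the paper, and the general Claim remains unproved.
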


Let us try to prove the following proposition for the lower dimensional case.

\begin{prop}
    For $\mathsf{Sq}_{3}^{d}$, and its adjacency matrix $\mathcal{A}_3$, let $\mathcal{H}:=\ell^2(\mathbb N)$ be the underlying infinite dimensional Hilbert space. Then the set 
    $$S=\{ S_{e_i}:=\sum_{j=1}^{\infty}\prescript{i}{}{E}_{\mathcal{E}j-A,8j-D} \mid \ \text{for fixed} \ 1\leq i\leq 26\}\label{Equ:PIs},$$
    is a Cuntz-Krieger $\mathcal{A}_3$-family for $D\in\{0,\cdots,7\}$,  $\mathcal{E}\in\{16, 24, 32, 48\}$, and $A\in\{0,\cdots,\mathcal{E}\}$ depending on the chosen edges, and gives us the graph $C^*$-algebra structure $\mathcal{C}^*(\mathsf{Sq}_{3}^{d})$.
\end{prop}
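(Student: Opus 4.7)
The approach would follow very closely the construction carried out in Theorem \ref{Cl:Op:1}, specialized to the combinatorial data of $\mathsf{Sq}_{3}^{d}$. The plan is to parametrize each of the $26$ directed edges $e_1,\ldots,e_{26}$ with a single partial isometry of the form $S_{e_i}=\sum_{j=1}^{\infty}\prescript{i}{}{E}_{\mathcal{E}j-A,\, 8j-D}$, where the choice of the column offset $D\in\{0,\ldots,7\}$ encodes the target vertex $r(e_i)\in\{v_1,\ldots,v_8\}$ (so that two edges ending at the same vertex share the column pattern $8j-D$), while the row offset $\mathcal{E}j-A$ encodes the source vertex together with which exit edge is taken. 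The values $\mathcal{E}\in\{16,24,32,48\}$ are taken to be $2\cdot 8, 3\cdot 8, 4\cdot 8, 6\cdot 8$ respectively, matching the exit degrees $\overset{\rightarrow}{\degg}_i\in\{2,3,4,6\}$ occurring in $\mathsf{Sq}_3^d$, and within each source vertex the parameter $A$ ranges over a block of consecutive shifts, one per exit edge.

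First I would record, once and for all, the target column $D$ attached to each of the eight vertices, using the enumeration of vertices from Figure \ref{Fig::4}; this defines the projection $P_{v}:=\sum_{j=1}^{\infty}E_{8j-D,\,8j-D}$ as the range projection $S_{e}^{*}S_{e}$ for every edge $e$ entering $v$. Then, reading off the $28$ Hamiltonian paths listed in Lemma \ref{Lem::3}, I would use the path-ordering to assign a consistent $A$ to each exit edge at each source vertex, exactly in the spirit of the $\mathcal{G}(\Pi_3)$ assignment given in the proof of Theorem \ref{Cl:Op:1}. Edges lying outside any Hamiltonian path are handled last, by filling in the remaining row offsets in the same source-block.

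Verification of the Cuntz--Krieger axioms then reduces to two mechanical computations. For relation \ref{Equ:S:1}, the identity $S_{e}^{*}S_{e}=P_{r(e)}$ follows because the product of a matrix-unit-sum with its adjoint collapses to $\sum_j E_{8j-D,\,8j-D}$ as soon as the column index $8j-D$ is fixed by $r(e)$. For relation \ref{Equ:S:2}, at each non-sink vertex $v$ one must check that $\sum_{s(e)=v}S_{e}S_{e}^{*}$ equals the orthogonal sum of rank-one projections $\sum_j E_{\mathcal{E}j-A,\,\mathcal{E}j-A}$ over $A$ in the prescribed exit block, and that this sum coincides (up to the unitary reshuffling) with $P_v$. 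For the two sink vertices in $\mathscr{V}_{\mathsf{si}}$, no sum is required, so the verification is vacuous there.

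The main obstacle, as in the $\mathcal{G}(\Pi_n)$ case, is purely combinatorial bookkeeping: one must choose the $\mathcal{E}$, $A$, $D$ assignments so that \emph{every} edge's row offset is consistent with the source block of \emph{every} Hamiltonian path it belongs to, while still producing pairwise orthogonal ranges for distinct edges sharing a source. A secondary subtlety is that the adjacency matrix $\mathcal{A}_{3}^{d}$ is degenerate (the row at the sink $v$ vanishes, as noted above), so the induction skeleton of Theorem \ref{Cl:Op:1}, which was stated for $\mathcal{G}(\Pi_n)$, has to be adapted to permit sink rows; this is handled by simply excluding sink vertices from relation \ref{Equ:S:2}, exactly as in Definition \ref{Defn::1...}. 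Once these indexing choices are fixed the proof terminates, just as the $n=3$ base case of Theorem \ref{Cl:Op:1} did, and shows that $C^{*}(S,P)$ is the infinite-dimensional graph $C^{*}$-algebra $\mathcal{C}^{*}(\mathsf{Sq}_{3}^{d})$.
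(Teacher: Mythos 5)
Your proposal matches the paper's own proof in all essentials: the paper likewise assigns to each of the $26$ edges a partial isometry of the form $\sum_{j=1}^{\infty}\prescript{i}{}{E}_{\mathcal{E}j-A,\,8j-D}$ with the column offset $D$ determined by the range vertex and the row offset $\mathcal{E}j-A$ by the source/exit-edge data, and then checks the two Cuntz--Krieger relations directly on these matrix-unit sums. The only step you leave implicit is the paper's preliminary bookkeeping of the quantities $\dim(P_{v_k}\mathcal{H})$ (its equations \ref{Pro::.1}), which it uses to argue that every projection is forced to be infinite-dimensional and hence that the infinite sums over $j$ are needed at all.
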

\begin{proof}
In order to prove this result, we will follow the same assertions as in \cite[Proposition 3.1]{R242} and \cite[Theorem 4.21]{R243}, but to avoid any possible complexity, we will omit some details. The idea is to associate each edge with a partial isometry, and to each vertex an orthogonal projection so that the relations of Definition \ref{Defn::1...} are satisfied, and there could be sets of projections of finite or infinite dimension. Hence, the first step is to clarify which of these spaces could be our living space!

For $\{\mathscr{S},\mathscr{P}\}\in B(\mathcal{H}):=B(\ell^2(\mathbb N))$, we begin by looking at the dimensions of the subspaces $P_{v_k}\mathcal{H}$ for $k\in\{1,\cdots,8\}$ to see whether we need to consider a finite-dimensional set of projections or not. We have

\begin{align}
&\dim(P_{v_3}\mathcal{H})=\dim(S_{e_{25}}\mathcal{H})+\dim(S_{e_{16}}\mathcal{H})+\dim(S_{e_{8}}\mathcal{H})=\dim(P_{v_{6}}\mathcal{H})+\dim(P_{v_{8}}\mathcal{H})+\dim(P_{v_{1}}\mathcal{H})\notag \\& \dim(P_{v_4}\mathcal{H})=\dim(S_{e_{26}}\mathcal{H})+\dim(S_{e_{15}}\mathcal{H})+\dim(S_{e_{10}}\mathcal{H})=\dim(P_{v_{3}}\mathcal{H})+\dim(P_{v_{1}}\mathcal{H})+\dim(P_{v_{5}}\mathcal{H})\notag \\& \dim(P_{v_5}\mathcal{H})=\dim(S_{e_{2}}\mathcal{H})+\dim(S_{e_{13}}\mathcal{H})+\dim(S_{e_{18}}\mathcal{H})=\dim(P_{v_{1}}\mathcal{H})+\dim(P_{v_{6}}\mathcal{H})+\dim(P_{v_{7}}\mathcal{H})\notag \\& \dim(P_{v_6}\mathcal{H})=\dim(S_{e_{3}}\mathcal{H})+\dim(S_{e_{21}}\mathcal{H})+\dim(S_{e_{14}}\mathcal{H})=\dim(P_{v_{1}}\mathcal{H})+\dim(P_{v_{8}}\mathcal{H})+\dim(P_{v_{5}}\mathcal{H})\notag \\& \dim(P_{v_7}\mathcal{H})=\dim(S_{e_{5}}\mathcal{H})+\dim(S_{e_{6}}\mathcal{H})+\dim(S_{e_{7}}\mathcal{H})+\dim(S_{e_{17}}\mathcal{H})\label{Pro::.1} \\&\hspace{1.9cm}=\dim(P_{v_{1}}\mathcal{H})+\dim(P_{v_{3}}\mathcal{H})+\dim(P_{v_{5}}\mathcal{H})+\dim(P_{v_{8}}\mathcal{H})\notag \\& \dim(P_{v_8}\mathcal{H})=\dim(S_{e_{1}}\mathcal{H})+\dim(S_{e_{4}}\mathcal{H})+\dim(S_{e_{9}}\mathcal{H})+\dim(S_{e_{20}}\mathcal{H})\notag\\& \hspace{1.9cm}=\dim(P_{v_{1}}\mathcal{H})+\dim(P_{v_{4}}\mathcal{H})+\dim(P_{v_{6}}\mathcal{H})+\dim(P_{v_{7}}\mathcal{H})\notag \\& \dim(P_{v_2}\mathcal{H})=\dim(S_{e_{1}}\mathcal{H})+\dim(S_{e_{6}}\mathcal{H})+\dim(S_{e_{11}}\mathcal{H})+\dim(S_{e_{12}}\mathcal{H})+\dim(S_{e_{23}}\mathcal{H})+\dim(S_{e_{24}}\mathcal{H})\notag\\& \hspace{1.9cm}=\dim(P_{v_{3}}\mathcal{H})+\dim(P_{v_{4}}\mathcal{H})+\dim(P_{v_{5}}\mathcal{H})+\dim(P_{v_{6}}\mathcal{H})+\dim(P_{v_{7}}\mathcal{H})+\dim(P_{v_{8}}\mathcal{H}).\notag
\end{align}

From equations \ref{Pro::.1} one can conclude that the projections involved in the set $\mathscr{P}$ associated with $\mathsf{Sq}_{3}^{d}$ cannot be finite dimensional and we should have $\dim(P_{v_k})>\infty$ for $k\in\{1,\cdots,8\}$. So, let us try to construct the appropriate set of \textit{CK}-$\mathsf{Sq}_{3}^{d}$ family satisfying in the relation \ref{Equ:KC}, which are as follows.

\begin{align*}
&S_{e_1}=\sum\limits_{j=1}^{\infty}\prescript{1}{}{E}_{16j-13,8j}, \quad \quad \ S_{e_2}=\sum\limits_{j=1}^{\infty}\prescript{2}{}{E}_{32j-3,8j-4}, \quad \ \ \ \ S_{e_3}=\sum\limits_{j=1}^{\infty}\prescript{3}{}{E}_{32j-3,8j-3},\\& S_{e_4}=\sum\limits_{j=1}^{\infty}\prescript{4}{}{E}_{24j-18,8j-1}, \quad \ S_{e_5}=\sum\limits_{j=1}^{\infty}\prescript{5}{}{E}_{24j-17,8j-2}, \quad \ \ \  S_{e_6}=\sum\limits_{j=1}^{\infty}\prescript{6}{}{E}_{48j-46,8j}, \\& S_{e_7}=\sum\limits_{j=1}^{\infty}\prescript{7}{}{E}_{48j-46,8j}, \quad \quad \ S_{e_8}=\sum\limits_{j=1}^{\infty}\prescript{8}{}{E}_{24j-19,8j-6}, \quad \ \ \ S_{e_9}=\sum\limits_{j=1}^{\infty}\prescript{9}{}{E}_{24j-13,8j-1}, \\& S_{e_{10}}=\sum\limits_{j=1}^{\infty}\prescript{10}{}{E}_{24j-4,8j-5}, \quad \  S_{e_{11}}=\sum\limits_{j=1}^{\infty}\prescript{11}{}{E}_{24j-9,8j}, \quad \ \ \ \ S_{e_{12}}=\sum\limits_{j=1}^{\infty}\prescript{12}{}{E}_{24j-10,8j}, \\& S_{e_{13}}=\sum\limits_{j=1}^{\infty}\prescript{13}{}{E}_{48j-47,8j-4}, \quad S_{e_{14}}=\sum\limits_{j=1}^{\infty}\prescript{14}{}{E}_{48j-46,8j-3}, \quad S_{e_{15}}=\sum\limits_{j=1}^{\infty}\prescript{15}{}{E}_{48j-45,8j-5}, \\& S_{e_{16}}=\sum\limits_{j=1}^{\infty}\prescript{16}{}{E}_{48j-44,8j-6}, \quad S_{e_{17}}=\sum\limits_{j=1}^{\infty}\prescript{17}{}{E}_{32j-4,8j-2}, \quad \  S_{e_{18}}=\sum\limits_{j=1}^{\infty}\prescript{18}{}{E}_{24j-2,8j-4}, 
\\& S_{e_{19}}=\sum\limits_{j=1}^{\infty}\prescript{19}{}{E}_{48j-43,8j-2}, \quad S_{e_{20}}=\sum\limits_{j=1}^{\infty}\prescript{20}{}{E}_{48j-42,8j-1}, \quad S_{e_{21}}=\sum\limits_{j=1}^{\infty}\prescript{21}{}{E}_{24j-1,8j-3},\\& S_{e_{22}}=\sum\limits_{j=1}^{\infty}\prescript{22}{}{E}_{32j-9,8j-1}, \quad \ S_{e_{23}}=\sum\limits_{j=1}^{\infty}\prescript{23}{}{E}_{32j-20,8j}, \quad \ \ \ S_{e_{24}}=\sum\limits_{j=1}^{\infty}\prescript{24}{}{E}_{32j-19,8j}, \\& S_{e_{25}}=\sum\limits_{j=1}^{\infty}\prescript{25}{}{E}_{24j-5,8j-6}, \quad S_{e_{26}}=\sum\limits_{j=1}^{\infty}\prescript{26}{}{E}_{24j-6,8j-5}.
\end{align*}

The above set of \textit{CK}-$\mathsf{Sq}_{3}^{d}$ family $\mathscr{S}=\{S_{e_{\ell}} \mid \ell \in \{1,\cdots,26\}\}$ provides us with an infinite dimensional $C^*$-graph algebra structure $C^*(\mathsf{Sq}_{3}^{d})$.
\end{proof}

But we have been more looking for a finite dimensional $C^*$-graph algebra structure!
\subsection{Looking for a solution!}
Let us invite back the commuting set of matrices $\prescript{c}{}{\mathcal{A}}_{n}^{d}$ to the scene, once again. Starting from $\prescript{c}{}{\mathcal{A}}_{3}^{d}$ and noting that the only choices for $p$ and $q$ are when we have $p=q=1$, and in this case we have the following matrix.

$$\prescript{c}{}{\mathcal{A}}_{3}^{d}=\begin{bmatrix}
    1 & 0 & 0 & 0 & 0 & 0 & 0 & 0\\ 0 & 0 & 0 & 1 & 0 & 0 & 0 & 0\\ 0 & 0 & 0 & 0 & 1 & 0 & 0 & 0\\ 0 & 0 & 0 & 0 & 0 & 0 & 1 & 0 \\ 0 & 0 & 0 & 0 & 0 & 1 & 0 & 0\\ 0 & 0 & 1 & 0 & 0 & 0 & 0 & 0 \\ 0 & 1 & 0 & 0 & 0 & 0 & 0 & 0\\ 0 & 0 & 0 & 0 & 0 & 0 & 0 & 1
\end{bmatrix},$$
and the associated directed graph will be as follows,

\vspace*{0.3cm}

\hspace*{2.8cm} \begin{tikzpicture}\label{Gra:2}
\tikzset{vertex/.style = {shape=circle,draw,minimum size=0.7em}}
\tikzset{edge/.style = {->,> = latex'}}
\textcolor{green}{\node[vertex] (a) at  (5,0) {$v_4$};}
\textcolor{blue}{\node[vertex] (b) at  (4,2) {$v_2$};}
\textcolor{red}{\node[vertex] (c) at  (3,0) {$v_3$};}
\textcolor{blue}{\node[vertex] (d) at  (4,-2) {$v_1$};}
\textcolor{red}{\node[vertex] (e) at  (0,3) {$v_8$};}
\textcolor{red}{\node[vertex] (f) at  (0,-3) {$v_5$};}
\textcolor{green}{\node[vertex] (g) at  (8,3) {$v_7$};}
\textcolor{green}{\node[vertex] (h) at  (8,-3) {$v_6$};}
\textcolor{black}{\draw[edge] (c) to node[above] {\hspace{0.2cm}\begin{turn}{20}$c_{1}$\end{turn}} (f);}
\textcolor{black}{\draw[edge] (a) to node[above] {\hspace{0.2cm}\begin{turn}{20}$c_{2}$\end{turn}} (h);}
\textcolor{black}{\draw[edge] (f) to node[left] {\hspace{0.2cm}\begin{turn}{20}$c_{4}$\end{turn}} (e);}
\textcolor{black}{\draw[edge] (h) to node[right] {\hspace{0.2cm}\begin{turn}{20}$c_{5}$\end{turn}} (g);}
\textcolor{black}{\draw[edge] (g) to node[above] {\hspace{0.2cm}\begin{turn}{20}$c_{6}$\end{turn}} (a);}
\textcolor{black}{\draw[edge] (e) to node[above] {\hspace{0.2cm}\begin{turn}{20}$c_{7}$\end{turn}} (c);}
\textcolor{black}{\draw[loop] (b) to node[above] {\hspace{0.2cm}\begin{turn}{0}$c_{8}$\end{turn}} (b);}
\textcolor{black}{\draw[loop] (d) to node[below] {\hspace{0.2cm}\begin{turn}{0}$c_{9}$\end{turn}} (d);}

\end{tikzpicture}
\captionof{figure}{\textbf{4-partite directed graph $\prescript{c}{}{ \mathsf{Sq}}_{3}^{d}$}}\label{Fig::4}

\vspace{1cm}

and concerning the 
$C^*(\prescript{c}{}{\mathsf{Sq}}_{3}^{d})$ we have the following result.
\begin{prop}
    Let $\prescript{c}{}{ \mathsf{Sq}}_{3}^{d}$ be as in figure \ref{Fig::4}. Then for $\mathcal{H}$ an Hilbert space, the set $\{\mathscr{S},\mathscr{P}\}\in B(\mathcal{H})$ consisting of the following partial isometries

    \begin{align}
    &S_{c_1}=S_{c_2}=E_{3,1}, \qquad S_{c_4}=S_{c_5}=E_{1,2},\label{Par:cSq3d}\\& S_{c_7}=S_{c_6}=E_{2,3},\qquad S_{c_8}=S_{c_9}=E_{1,3}.
\end{align}
will provide a CK-$\prescript{c}{}{ \mathsf{Sq}}_{3}^{d}$ family as generators of the finite dimensional $C^*$-graph algebra $C^*(\prescript{c}{}{ \mathsf{Sq}}_{3}^{d})=C^*(\mathscr{S},\mathscr{P})$, which could be identified as $\mathcal{M}_{3,3}( C(\mathbb T^2))$ for $\mathcal{M}_{3,3}$ a subspace consisting of block matrices of the space of $6\times 6$ matrices.
    
\end{prop}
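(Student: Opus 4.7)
The plan is to proceed in two stages: first, verify the Cuntz–Krieger relations of Definition~\ref{Defn::1...} for the prescribed generators; second, identify the resulting $C^*$-algebra with the claimed matrix algebra over $C(\mathbb{T}^2)$.

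For the first stage, I would define the vertex projections so that the first CK-relation $S_e^{*} S_e = P_{r(e)}$ is forced on the prescribed partial isometries. Using the standard matrix-unit multiplication $E_{i,j} E_{k,l} = \delta_{jk} E_{i,l}$, one computes
\begin{align*}
S_{c_1}^{*} S_{c_1} = S_{c_2}^{*} S_{c_2} = E_{1,1}, \qquad S_{c_4}^{*} S_{c_4} = S_{c_5}^{*} S_{c_5} = E_{2,2}, \qquad S_{c_6}^{*} S_{c_6} = S_{c_7}^{*} S_{c_7} = E_{3,3},
\end{align*}
which, combined with the range vertices read off Figure~\ref{Fig::4}, pins down $P_{v_5} = P_{v_6} = E_{1,1}$, $P_{v_7} = P_{v_8} = E_{2,2}$, and $P_{v_3} = P_{v_4} = E_{3,3}$. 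The loop projections $P_{v_1}, P_{v_2}$ are determined analogously from $S_{c_9}$ and $S_{c_8}$.

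Next, since every non-sink vertex other than $v_1, v_2$ emits exactly one edge, the second CK-relation reduces at each of those six vertices to a single identity $P_v = S_e S_e^{*}$ that is immediate from the same matrix-unit calculus. At the loop vertices $v_1, v_2$ one additionally needs $P_{v_j} = S_{c_k} S_{c_k}^{*} = S_{c_k}^{*} S_{c_k}$, so the natural reading of the loop prescription is that $S_{c_8}, S_{c_9}$ are genuinely unitary on their range corners; this brings the restriction of the universal algebra to each loop component to the circle algebra $C(\mathbb{T})$.

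For the identification, I would exploit the fact that $\prescript{c}{}{\mathsf{Sq}}_3^d$ decomposes as a disjoint union of two directed $3$-cycles on $\{v_3,v_5,v_8\}$ and on $\{v_4,v_6,v_7\}$, together with two isolated loops at $v_1$ and $v_2$. By the standard catalogue of graph $C^*$-algebras, each $3$-cycle contributes an $M_3(C(\mathbb{T}))$ summand and each loop a $C(\mathbb{T})$ summand. The two loops act on mutually orthogonal projections and therefore commute, so bundling them produces a common $C(\mathbb{T}^2)$ factor; packaging the two $M_3$ blocks against this factor inside the appropriate block-diagonal subspace of $M_6(C(\mathbb{T}^2))$ yields the stated $\mathcal{M}_{3,3}(C(\mathbb{T}^2))$.

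The delicate point is this last identification: because the prescribed representation collapses distinct edges (e.g.\ $c_1$ and $c_2$) onto a common generator, concluding that $C^*(\mathscr{S},\mathscr{P})$ is the full universal algebra rather than a proper quotient calls for the Gauge-Invariant Uniqueness Theorem. The plan there is to exhibit a circle action on $C^*(\mathscr{S},\mathscr{P})$ intertwining with the canonical gauge action on $C^*(\prescript{c}{}{\mathsf{Sq}}_3^d)$ and to note that every vertex projection is nonzero in the representation; both are essentially immediate from the block structure, but this is the step that demands the most care.
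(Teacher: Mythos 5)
Your overall route is the same as the paper's: read off that $\prescript{c}{}{\mathsf{Sq}}_{3}^{d}$ is a disjoint union of two directed $3$-cycles and two loops, assign each cycle an $M_3(C(\mathbb{T}))$ summand and each loop a $C(\mathbb{T})$ summand, and then repackage. The extra care you take in verifying the first Cuntz--Krieger relation on matrix units, and in flagging that faithfulness of the concrete representation needs the Gauge-Invariant Uniqueness Theorem, goes beyond what the paper writes down. However, there are genuine gaps at exactly the points you wave at. First, the gauge-invariance step does not merely ``demand care'' --- it fails for the prescribed generators: the matrix units $E_{3,1}, E_{1,2}, E_{2,3}$ generate $M_3(\mathbb{C})$, not $M_3(C(\mathbb{T}))$, and no circle action with $\beta_z(S_e)=zS_e$ exists on that image (conjugation by a diagonal unitary forces $z^3=1$). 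So $C^*(\mathscr{S},\mathscr{P})$ is a proper quotient of the universal graph algebra, and the claimed identification cannot be extracted from this representation. Second, the loop generators $S_{c_8}=S_{c_9}=E_{1,3}$ satisfy $S^{*}S=E_{3,3}\neq E_{1,1}=SS^{*}$, so they do not satisfy the Cuntz--Krieger relations at a loop vertex at all; your ``natural reading'' silently replaces the stated generators by different ones rather than verifying the relations for the ones given. Third, bundling the two loop summands into ``a common $C(\mathbb{T}^2)$ factor'' confuses direct sum with tensor product: two unitaries supported on mutually orthogonal corners generate $C(\mathbb{T})\oplus C(\mathbb{T})\cong C(\mathbb{T}\sqcup\mathbb{T})$, which is not $C(\mathbb{T}\times\mathbb{T})$.

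In fairness, the paper's own proof is no more rigorous: it asserts the same decomposition via a dimension count on the $P_{v_i}\mathcal{H}$, and its displayed chain of isomorphisms contains the same two errors (passing from $C(\mathbb{T})\oplus C(\mathbb{T})$ to $C(\mathbb{T})$ and from $\mathcal{M}_{3,3}(C(\mathbb{T}))\oplus C(\mathbb{T})$ to $\mathcal{M}_{3,3}(C(\mathbb{T}^2))$), and it never checks that the displayed matrix units generate the universal algebra rather than a finite-dimensional quotient. So your proposal reproduces the paper's approach, including its flaws, but at least it isolates the step --- gauge-invariant uniqueness --- at which the argument actually breaks down; to repair the statement one would either have to change the generators to genuinely implement the two circle algebras and the two $M_3(C(\mathbb{T}))$ blocks, or weaken the conclusion to an identification of the quotient that these particular matrix units do generate.
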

\begin{proof}
    As usual, for $i\in\{1,\cdots,8\}$, the orthogonal projections $P_{v_i}$ might have nonzero or an infinite dimension. So let us find out that which space do we live in.

    Let $\mathcal{H}$ to be the underlying Hilbert space and for $\{\mathscr{S},\mathscr{P}\}\in B(\mathcal{H})$, let us start by looking at the dimension of the subspaces $P_{v_i}\mathcal{H}$ for $i\in\{1,\cdots,8\}$, and if any of $P_{v_i}\mathcal{H}$ are infinite-dimensional, then they all have to be infinite-dimensional.

    \begin{align}
&\dim(P_{v_3}\mathcal{H})=\dim(S_{c_{7}}\mathcal{H})=\dim(P_{v_{8}}\mathcal{H}),\notag \\& \dim(P_{v_8}\mathcal{H})=\dim(S_{c_{4}}\mathcal{H})=\dim(P_{v_{5}}\mathcal{H}),\label{Eq:cSq3d:1}  \\& \dim(P_{v_5}\mathcal{H})=\dim(S_{c_{1}}\mathcal{H})=\dim(P_{v_{3}}\mathcal{H}),\notag \\& \dim(P_{v_2}\mathcal{H})=\dim(S_{c_{8}}\mathcal{H}),\notag \\& \dim(P_{v_1}\mathcal{H})=\dim(S_{c_{9}}\mathcal{H}),\label{Eq:cSq3d:2} \\& \dim(P_{v_4}\mathcal{H})=\dim(S_{c_{6}}\mathcal{H})=\dim(P_{v_{7}}\mathcal{H}),\notag \\& \dim(P_{v_7}\mathcal{H})=\dim(S_{c_{5}}\mathcal{H})=\dim(P_{v_{6}}\mathcal{H}),\label{Eq:cSq3d:3} \\& \dim(P_{v_6}\mathcal{H})=\dim(S_{c_{2}}\mathcal{H})=\dim(P_{v_{4}}\mathcal{H}).\notag
\end{align}
Note that relations (\ref{Eq:cSq3d:1}) and (\ref{Eq:cSq3d:3}) will provide us with the graph $C^*$-algebra $M_3(C(\mathbb T))$ and the relations (\ref{Eq:cSq3d:2}) will provide us with the graph $C^*$-algebra $C(\mathbb T)$. And now by considering $\mathcal{M}_{3,3}$ be a subspace of the space of $6\times 6$ matrices, consisting of block matrices $\begin{bmatrix}
    A& 0\\ 0& B
\end{bmatrix}$, for $A$ and $B$ some $3\times 3$ matrices, then the claim might be formulated as follows 

\begin{align}
    C^*(\prescript{c}{}{\mathsf{Sq}}_{3}^{d}):=C^*(\mathscr{S},\mathscr{P})&=M_3( C(\mathbb T))\oplus M_3( C(\mathbb T)) \oplus C(\mathbb T) \oplus C(\mathbb T)\notag \\& \cong
    \mathcal{M}_{3,3}( C(\mathbb T))\oplus C^2(\mathbb T)\notag \\&\cong \mathcal{M}_{3,3}( C(\mathbb T))\oplus C(\mathbb T)\notag \\& \cong \mathcal{M}_{3,3}( C(\mathbb T^2)),
\end{align}

and the family $\mathscr{S}$ would be as follows.

\begin{align*}
    &S_{c_1}=S_{c_2}=E_{3,1}, \qquad S_{c_4}=S_{c_5}=E_{1,2},\\& S_{c_7}=S_{c_6}=E_{2,3},\qquad S_{c_8}=S_{c_9}=E_{1,3}.
\end{align*}
which hopefully will provide us with the desired result!

\end{proof}
Note that this paper still has not been concluded and the computation for the higher dimensions still remains open and the extended and the revised versions will be uploaded in arXiv!

\section{Concluding remarks}
The work of this paper is a continuation of our previous works \cite{RH24,R242,R243} on studying Cuntz-Krieger graph family related problems. We started this paper by looking up for some set of regulated directed graphs (with the same number of the source and sink vertices) having a regulated odd number of Hamiltonian paths. Then we turned our attention to study set of regulated colored directed graphs, and this was without any intention, and only caused by the way of implementing our ideas.

Graph $C^*$-algebras have shown their great importance in modern mathematics, as we have seen in \cite{R242} that these very interesting examples lead us in proposing the first initial examples of what so called the multiplier Hopf $*$-graph algebras (note that the naming might not be correct, but this is the way how we call them!). The proposed examples and the later on study on the matrix representation theory of $C(S_{n}^{+})$ that will appear in our next work (still under preparation), might lead the community in answering the question raised by Wang \cite{SW99}, asking ``if there are $q$-deformation of the finite groups of Lie type in the sense of quantum groups, and how it might be proceeded!''. 

We have to admit that finding and studying such a set of regulated directed locally finite graphs might not be an easy task to do, but this is how science works, and it is the only way!

The work of this paper still has not been concluded and one might look for such set of graphs that could provide us with some interesting finite dimensional graph $C^*$-algebras!

Apart from the above missleading directions, one might also take a part in looking for the associated non-commutative (confusability) graph generated by $S_{i}^{*}S_i$, for $S_i$s the partial isometries as the generatores of the $C^*$-graph algebra $C^*(\mathscr{S},\mathscr{P})$ and the other related structures!
\section{Acknowledgement}
\hspace*{0.2cm}  
The author is partially supported by a grant from the Institute for Research in Fundamental Sciences (\textit{IPM}), with grant No. 1404140052.




\def\cprime{$'$} \def\cprime{$'$} \def\cprime{$'$}
\providecommand{\bysame}{\leavevmode\hbox to3em{\hrulefill}\thinspace}
\providecommand{\MR}{\relax\ifhmode\unskip\space\fi MR }
\providecommand{\MRhref}[2]{%
	\href{http://www.ams.org/mathscinet-getitem?mr=#1}{#2}
}
\providecommand{\href}[2]{#2}

\let\itshape\upshape








\end{document}